\newcommand\smvee{\raise0.3ex\hbox{$\scriptscriptstyle\vee$}}
\numberwithin{equation}{subsection}
\newtheorem{theorem}{Theorem}[subsection]
\newtheorem{prop}[subsubsection]{Proposition}
\newtheorem{lemma}[subsubsection]{Lemma}
\newtheorem{defn}[subsection]{Definition}
\newcommand{\myparagraph}{%
	\refstepcounter{subsubsection} 
	\textbf{\thesubsubsection} 
}
\theoremstyle{definition}
\newtheorem{remark}[subsubsection]{Remark}
\def\Q{\mathbf{Q}}
\def\mor{\mathrm{Mor}}
\def\jfrak{\mathfrak{j}}
\def\ifrak{\mathfrak{i}}
\def\P{\mathbb{P}}
\def\Pc{\mathcal{P}}
\def\d{\mathbf{d}}
\def\E{\mathscr{E}}
\def\Z{\mathcal{Z}}
\def\F{\mathcal{F}}
\def\I{\mathcal{I}}
\def\X{\mathcal{X}}
\def\Shv{\mathrm{Shv}}
\def\Hom{\mathrm{Hom}}
\def\1{\mathbb{1}}
\def\opn{\mathcal{O}_{\mathbb{P}^N}}
\def\O{\mathcal{O}}
\def\g{\mathbf{g}}
\def\sgn{\mathit{sgn}}
\def\pic{\mathrm{Pic}}
\newcommand\Sym{\mathrm{Sym}}
\date{}
\DeclareMathOperator{\Shom}{\mathscr{H}\text{\kern -3pt {\calligra\large om}}\,}
\DeclareMathOperator{\Sext}{\mathscr{E}\text{\kern -3pt {\calligra\large xt}}\,}
\newcommand\shorttitle{{A configuration space model for algebraic function spaces }}
\newcommand\authors{{Oishee Banerjee}}
\title{A configuration space model for algebraic function spaces \\ \vspace{1mm} \textit{\normalsize {To Benson Farb on his {57$^{th}$} birthday}}}
\author{Oishee Banerjee}
\begin{document}
	\maketitle
	
		\begin{abstract}We prove that the space of algebraic maps between two smooth projective varieties, under certain conditions, admit a configuration space model, thereby obtaining an algebro-geometric analogue of Bendersky-Gitler's result (\cite[Theorem 7.1]{BG91}) on topological function spaces. Our result should be a thought of as a natural higher dimensional counterpart of \cite[Theorem 3]{Ban24}.
		
		\end{abstract}

	\section{Introduction}\label{sec:introduction}
	
\paragraph{Motivation.} The study of spaces of \emph{continuous} maps between two topological spaces, and their surprising connections with configuration spaces, has a rich history spanning several decades. Foundational works by Anderson, Bendersky-Gitler, Snaith, Cohen-May-Taylor, Arone, Ahearn-Kuhn, and others have explored this extensively (see \cite{BG91, Anderson72}, and also \cite{AK02, Arone99} and the references therein). One key result is the stable splitting of function spaces under certain constraints (e.g., connectivity conditions on the range), where the components of the splitting include, among other structures, configuration spaces on the domain.

An exact analogue of this phenomenon for algebraic maps between algebraic varieties is unrealistic due to the rigidity of morphisms between varieties. Nevertheless, in this note, we show that the moduli space of algebraic morphisms between two smooth projective varieties, under certain strong conditions on the range, can, in a sense, admit a configuration space model. This establishes an algebro-geometric analogue of Bendersky-Gitler's result on the space of continuous maps between topological spaces (\cite[Theorem 7.1]{BG91}). Our result can be seen as a natural higher-dimensional counterpart of \cite[Theorem 3]{Ban24}.

\paragraph{Setup.} Throughout the paper, we fix smooth projective varieties $X$ and $Y$ over an algebraically closed field of characteristic $0$. We fix $\Upsilon$  a polarization on $Y$ i.e. we fix an embedding $\upsilon: Y\hookrightarrow \P^N$ where $\Upsilon=\upsilon^* \opn(1)$ and $N=\dim |\Upsilon|$, the rank of the complete linear system $|\Upsilon|$. 

A morphism $f: X\to\P^N$ corresponds to a line bundle $L$ on $X$ such that $L = f^*\O_{\P^N}(1)$. Let $\d:= c_1(L) \in N^1(X)$.



A morphism $f:X\to Y$ corresponds to a line bundle $L$ on $X$ such that $L=f^*\Upsilon = {f^*} {\circ} \upsilon^* \opn(1)$. We say $f$ has \emph{degree} $\d$ if $c_1(L) = \d$. Let $\mor_{\d}(X,Y)$ be the moduli space of morphisms $f: X\to Y$ such that $c_1(f^*\Upsilon) =\d$. 


We say a \emph{numerical class $\d\in N^1(X)$ separates $r$ points} if it is ample and if every line bundle $L\in \pic_{\d}(X)$ separates $r$ points\footnote{There are closely related (stronger) notions like $r$-very ampleness, $r$-spannedness etc. which have been studied for decades, pioneered by the work of Beltrametti, Sommese and others, however, the weaker notion of  separating points (as opposed to jets) is sufficient for the purpose of this note. Aumonier in \cite{Aumonier2024} calls this property of line bundles $k$-interpolating. However, `separating points' is a standard phrase in algebraic geometry to describe the phenomenon, so we adhere to that.} (see \ref{lemma:r-separating}). Note that if a line bundle $L$ (respectively, a numerical class $\d$) separates $r$ points, then it separates $(r-1)$ points; let $r(L)$ (respectively, $r(\d)$) denote the maximum number $r$ for which $L$ (respectively, $\d$) separates $r$-points. If $\d \in N^1(X)$ separates at least one point, we define\begin{equation}\label{eq:rd}
	r(\d): = \max \{r: \d \text{ separates } r \text{ points} \}.
\end{equation}
Furthermore, we say $\d$ is \emph{acyclic} of all higher cohomologies of all line bundles in its numerical equivalence class vanish.


 Now we state our theorem.
\begin{theorem}\label{theorem}Let $X$ be a smooth projective variety of dimension $n$, $(Y,\Upsilon)$ a polarized smooth projective variety, and $N:=\dim |\Upsilon|$. Let $\d$ be acyclic, and let $r(\d)$ be as in \eqref{eq:rd}. Then:
	\begin{enumerate}
		\item\label{statement1} 	 Then there exists a first quadrant spectral sequence of Galois representations/mixed Hodge structures:\begin{gather}\label{ss:theorem}
			E_1^{p,q} \implies H^{p+q}_c(\mor_{\d}(X,Y);\Q)
		\end{gather} with \begin{gather}\label{ss:E1terms}
			E_1^{p,*} = (H^*(\mathring{X}^{p};\Q)\otimes \sgn_{S_p})^{S_p}\otimes H^*(\pic_{\d}(X);\Q)\otimes H_c^*(Y(D_{p-1});\Q)
		\end{gather} for all $p\leq r(\d)+1$, where $Y(D_p)$ are certain auxilliary schemes defined in, and satisfying the conditions of \S \ref{para:keyassumption}, and $$\mathring{X}^{p}:= X^p-\{\text{ diagonals}\}.$$
		
		\item\label{statement2}\textbf{Homological stability:} 	In the case when $Y=\P^N$,  the spectral sequence \eqref{ss:theorem} results in \begin{gather}\label{ss:E2terms}
E_2^{p,q}=E_{\infty}^{p,q}
		\end{gather} for all $0\leq p\leq r(\d)+1$, and $\dim(\mor_{\d}(X,Y))-r(\d) \leq q\leq \dim(\mor_{\d}(X,Y))$.
		
		\item\label{statement3} If $\delta:= \d-c_1(K_X)$ is ample, then the stable bound $r(\d)$ satisfies the following equality:\begin{equation}\label{eq:rdbound}
			r(\d) =\left\lfloor\min_{\substack{[W]\in \mathrm{CH}_k(X),\\ 1\leq k\leq n}} \frac{2(\delta^k.[W])^{\frac{1}{k}}-n^2+n-1}{2}\right\rfloor -1
		\end{equation} where $\mathrm{CH}_k(X)$ is the $k^{th}$ Chow group of $X$.
	\end{enumerate}
\end{theorem}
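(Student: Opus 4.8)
\textbf{Part (\ref{statement1}): construction of the spectral sequence.} The plan is to realize $\mor_{\d}(X,Y)$ as the base-point-free locus inside a bundle of linear systems over $\pic_{\d}(X)$ and to extract the spectral sequence from the stratification of its complementary base-locus discriminant by configurations of base points. Concretely, acyclicity of $\d$ forces $h^0(X,L)=\chi(L)$ to be constant as $[L]$ ranges over $\pic_{\d}(X)$, so the spaces $H^0(X,L)$ assemble into a vector bundle $\mathcal{V}\to\pic_{\d}(X)$; an ordered $(N+1)$-tuple of sections defines a rational map $X\dashrightarrow\P^N$, and $\mor_{\d}(X,\P^N)$ is the open subvariety where the tuple is base-point free, the morphisms landing in $Y$ being cut out further by the equations of $\upsilon$ (which is precisely what the auxiliary schemes $Y(D_p)$ of \S\ref{para:keyassumption} are designed to encode). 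First I would take compactly supported cohomology and pass to the complement, i.e. the closed locus of tuples acquiring base points, and filter it by the configuration of the base scheme. Using the point-separating hypothesis, imposing the vanishing of all sections at $p\le r(\d)$ prescribed distinct points is a system of independent linear conditions, so the stratum indexed by $p$ base points fibers over the ordered configuration space $\mathring{X}^{p}$ with the remaining data controlled by $\pic_{\d}(X)$ and by $Y(D_{p-1})$. The spectral sequence of this filtration is \eqref{ss:theorem}; the factor $(H^*(\mathring{X}^{p};\Q)\otimes\sgn_{S_p})^{S_p}$ arises because the base points are unordered and the normal directions to the diagonal strata contribute the sign character, exactly as in the Leray/transfer computation for unordered configuration spaces. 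Since all strata, maps and the filtration are algebraic, the whole spectral sequence is one of Galois representations / mixed Hodge structures, and the identification of $E_1^{p,*}$ in \eqref{ss:E1terms} is valid precisely in the range $p\le r(\d)+1$, where point-separation guarantees the strata have the expected codimension.

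\textbf{Part (\ref{statement2}): degeneration for $Y=\P^N$.} When $Y=\P^N$ the target constraint disappears and the schemes $Y(D_p)$ degenerate to Tate-type pieces, so each column $E_1^{p,*}$ (hence $E_2^{p,*}$) becomes a product of the abelian-variety cohomology $H^*(\pic_{\d}(X);\Q)$, the configuration factor $(H^*(\mathring{X}^{p};\Q)\otimes\sgn_{S_p})^{S_p}$, and a pure Tate piece. I would prove \eqref{ss:E2terms} by a weight argument: $H^{p+q}_c$ of a variety carries weights $\le p+q$, and in the indicated corner $0\le p\le r(\d)+1$, $\dim\mor_{\d}(X,Y)-r(\d)\le q\le\dim\mor_{\d}(X,Y)$ the groups $E_2^{p,q}$ are pure of a single weight forced by this product description. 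As every differential $d_r$ with $r\ge 2$ is a morphism of mixed Hodge structures and hence strictly compatible with weights, the weight mismatch between $E_2^{p,q}$ and its target $E_2^{p+r,q-r+1}$ in this range annihilates all higher differentials, giving $E_2=E_\infty$.

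\textbf{Part (\ref{statement3}): the numerical value of $r(\d)$.} By \ref{lemma:r-separating} it suffices to determine, uniformly over the numerical class, the largest $r$ for which every $L\in\pic_{\d}(X)$ separates $r$ points. Writing $L=K_X+\delta$ with $\delta$ ample, this is an effective point-separation question for adjoint bundles, which I would settle by the Angehrn--Siu method: for any $r$ prescribed points, constructing a singular metric / multiplier ideal on $\delta$ whose Nadel vanishing produces the required sections, the numerical input being exactly the intersection numbers $(\delta^k\cdot[W])^{1/k}$ against subvarieties $W$ together with the universal correction $\tfrac{n^2-n+1}{2}$ emerging from the dimension bookkeeping of the cut-down-the-base-locus induction. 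This yields $r(\d)\ge$ the right-hand side of \eqref{eq:rdbound}; for the reverse inequality I would use that the threshold is attained on the extremal subvariety realizing the minimum, on which one exhibits a length-$(r+2)$ subscheme that fails to impose independent conditions, making the floor sharp.

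\textbf{Main obstacle.} The principal difficulty is Part (\ref{statement1}): identifying the associated graded of the base-locus filtration with the clean product \eqref{ss:E1terms}. Outside the point-separating range the base scheme need not be reduced or of expected length, so strata can jump in dimension and the configuration description breaks; confining attention to $p\le r(\d)+1$ and verifying that point-separation supplies exactly the transversality needed for the fibration over $\mathring{X}^{p}$, while simultaneously matching the target data to the prescribed $Y(D_{p-1})$, is the technical heart of the argument. The sharpness half of Part (\ref{statement3}) is the secondary obstacle, since Angehrn--Siu estimates are customarily proved only as inequalities, and upgrading \eqref{eq:rdbound} to an equality demands an explicit extremal construction.
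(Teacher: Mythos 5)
Your geometric picture for part (\ref{statement1}) --- the bundle of linear systems over $\pic_{\d}(X)$, the discriminant of tuples acquiring base points, and point-separation making $p\le r(\d)+1$ vanishing conditions independent --- matches the paper's setup, but the mechanism you propose for extracting the spectral sequence is where the gap lies. You filter the discriminant ``by the configuration of the base scheme,'' yet the loci of tuples with exactly $p$ base points are not closed, their closures contain both collision strata and tuples with positive-dimensional base loci, and outside the point-separating range there is no control at all; you flag this as the technical heart but offer no device to handle it. The paper's actual engine is different: it forms a proper hypercover $\X_{\bullet}(Y)\to\Z_{\d}(X,Y)$ by taking fibre powers of the incidence variety $\X_0(Y)$ over the compactification $\X_{-1}(Y)$ (Proposition \ref{lemma:hypercoverY}), applies cohomological descent together with the open-closed localization triangle, and obtains the twist $(-\otimes\sgn)^{S_p}$ not from a heuristic about normal directions to the diagonals but from the symmetric-simplicial ($\Delta S$) structure on the hypercover via Lemma \ref{lemma:isomorphismDeltaS}, which identifies the Moore complex with the sign-twisted invariants. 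The product form of $E_1^{p,*}$ then comes from Lemma \ref{lemma:Xr} (stratified projective bundles) plus the Leray--Hirsch hypothesis of \S\ref{para:keyassumption}. A rigorous version of your stratification is essentially the skeletal filtration of this hypercover, so you would end up rebuilding the same machinery; as written, the passage from ``filtration'' to \eqref{ss:E1terms} is asserted rather than proved.

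In part (\ref{statement2}) your purity argument only controls differentials between columns with $p\le r(\d)+1$: a differential $d_s$ can land in a column $p+s>r(\d)+1$ where no product description, hence no purity, is available, and strictness of morphisms of mixed Hodge structures does not kill a map out of a pure source unless one knows the target has no graded piece of that weight. The paper closes this with a codimension estimate (the image of $\X_r$ in $\X_{-1}$ has codimension at least $(r+1)nN$), which is why the unknown columns cannot contribute in the corner $\dim\mor_{\d}(X,Y)-r(\d)\le q\le\dim\mor_{\d}(X,Y)$; this is also the only reason that $q$-range appears in the statement, a point your write-up leaves unexplained. For part (\ref{statement3}), the paper simply quotes Angehrn--Siu (Theorem \ref{theorem:AS}) together with Lemma \ref{lemma:r-separating}; your plan to reprove the lower bound by multiplier ideals is fine but unnecessary, while your proposed reverse inequality via an extremal length-$(r+2)$ subscheme failing to impose independent conditions is not substantiated --- the Angehrn--Siu threshold is not known to be sharp in general --- so the equality \eqref{eq:rdbound} cannot rest on it.
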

	
\paragraph{Some remarks and context} \begin{enumerate}
	\item \textbf{Poincare/Koszul duality.} A recurring phenomenon in the study of such algebraic function spaces, as demonstrated in this note, as well as in \cite{Ban24}, is the appearance of a Koszul-type cochain complex (see \eqref{isom:twistedbysgn}) in the analysis of the Poincare dual of $\mor_{\d}(X,Y)$. Whereas it appears naturally in our proof via the theory of hypercovers, it does beg the question of whether there is a factorization homology approach to proving something like Theorem \ref{theorem}. 
	
	An affirmative answer has already been provided by Ho (\cite{Ho20}) in the case $X=\P^1$ and $Y=\P^n$ (because in such cases, algebraic maps simply boil down to studying zero-cycles on $X$) and there is paramount evidence of an affirmative answer for curves of higher genera by exploiting algebraic non-abelian Poincare duality in the sense of Gaitsgory-Lurie (\cite[\S 3]{GL17}). However, a naive translation of the curve-case to higher dimensional domains have several obvious pitfalls, some of which becomes clear as we go through the proof, and discussing the rest would take us too far afield (see e.g. \cite[\S 1.5]{GL19}).

	\item \textbf{$\mor_{\d}(X,Y)$ vs. its topological analogue $\mathrm{Top}_{\d}(X,Y)$.} In a direction distinct from the study of the relationship between (continuous) function spaces and configuration spaces, significant attention has been given to the comparison of spaces of holomorphic/algebraic functions with that of continuous functions between complex holomorphic manifolds since the '70s (for a brief history of it see \cite[\S 1.1]{Aumonier2024}). Two notable examples are: Mostovoy's work (\cite{Mostovoy06}) where $X,Y$ are both projective spaces, and a recent work of Aumonier's (see \cite{Aumonier2024}) where he compares stable homology of $\mor_{\d}(X,\P^N)$ to that of continuous maps $X \to \P^N$ for arbitrary smooth projective $X$, thus significantly extending Mostovoy's result. 
	
	If our theorem relating the (cohomology of the) space of algebraic maps to configuration spaces appears somewhat unexpected, it is worth highlighting that results by Anderson, Bendersky-Gitler (\cite{Anderson72, BG91}), when combined with those of Aumonier or Mostovoy, lend credence to such a connection.
	
Our theorem not only confirms this connection but also strengthens it in two key ways: (a) it demonstrates that the Galois representations or mixed Hodge structures on both sides are preserved, and (b) it underscores the role of the intersection theory of $X$, as evidenced by Equation \ref{eq:rdbound}. Notably, pulling back a 'configuration space model' for the space of continuous maps via Segal-type results from Aumonier or Mostovoy offers no insight into Hodge structures. In contrast, our approach—grounded entirely in algebraic geometry—makes these structures explicit and central to the discussion.
	
	The only prior instance of an explicit comparison between the space of algebraic function spaces and configuration spaces appears in the author's earlier work \cite{Ban24, Banerjee2022}--- those were in the case when $X$ is a curve. 
	
	\item  \text{Adjoint line bundles and point-separation.} The property of separating finitely many points is not  numerical property of a line bundle $L$.  They are, however, numerical properties on the \emph{adjoint} line bundle $L\otimes K_X$ (or, more generally, on $L\otimes K_X^{\otimes m}$ for suitable values of $m$). The interested reader may refer to the works of Angehrn-Siu, Reider, Demailly, Ein-Lazarsfeld-Nakamaye, Kollar etc (see \cite{AS95} and the references therein). Which is why explicit bounds like \eqref{eq:rdbound} is available only for adjoints of line bundles, not the line bundles themselves. 
	
	\item \textbf{On the auxilliary schemes $Y(D_p)$.} The condition of the `auxilliary schemes' $Y(D_p)$ being of \emph{Leray-Hirsch type} (see paragraph \ref{para:keyassumption}) over $\pic_{\d}(X)$ in a range of values of $p$, is absolutely indispensable. For a general $Y$, the schemes $Y(D_p)$ can often be empty, and in the cases when they are non-empty, proving its non-emtpyness is usually highly nontrivial. Case at hand is the geometric Manin's conjecture: even in the seemingly simple case of $X=\P^1$, if $Y$ is a low degree hypersurface in a sufficiently high dimensional projective space, the proof is extremely difficult--- as shown by Browning-Sawin in \cite{BS20}. See \S\ref{para:keyassumption} for a further discussion on the terminology and the importance of the additional condition of being Leray-Hirsch type.
	
	\item \textbf{Divergence of $r(\d)$.} Whereas the range $Y$ often poses insurmountable difficulties, tackling an arbitrary domain is relatively simpler, at least in the context of questions like stable homology of these function spaces--- it only needs a part of the intersection theory of $X$ as an input. Observe that \eqref{eq:rdbound} for the stable bound depends solely on the intersection theory of $X$. In particular, higher the positivity of $\delta$, higher the intersection numbers $\delta^k.[W]$, and since the positivity of $\delta$ diverges, so does $r(\d)$.
	
	\item \textbf{Spaces of sections of vector bundles.} Observing that sections of vector bundles on a smooth projective variety $X$ can themselves be viewed as algebraic function spaces of a specific kind (locally maps to $\mathbb{A}^N$ on $X$), a careful reader following our proof would notice that our methods translate almost verbatim—indeed, more straightforwardly in the absence of geometric complexities from $Y$ or $\pic_{\d}(X)$—to providing an alternative proof of the cohomological results by Das and Howe (see \cite{DH24}). One only needs to replace the condition of vector bundles \emph{separating points} by various degrees fof \emph{jet-ampleness}. While the author does not claim expertise in their approach, it seems plausible that Das and Howe's cohomological inclusion-exclusion framework could, in principle, be adapted to derive results analogous to our Theorem \ref{theorem}.
	
\end{enumerate}

\noindent	\textbf{Method of Proof.} Our proof is sheaf-theoretic, morally similar to \cite{Ban24, Banerjee2022}. However, due to the oft-encountered dichotomy between the algebro-geometric complexity of varieties of dimension $1$ and higher, we need to incorporate modern perspectives on certain classical results.

A fundamental component of our approach is the use of cohomological descent for proper hypercoverings. We construct a natural compactification of $\mor_{\d}(X, Y)$, and a proper hypercover (\eqref{def:Xr}) augmented on its Poincare dual that admits cohomological descent. At the level of sheaves, this hypercover--- via a the passage through the \emph{symmetric simplicial category},  originally introduced in the context of additive K-theory by Feigin and Tsygan (\cite{FT87}), and later further developed by Fiederowicz and Loday (\cite{FL91})--- gives rise to a Koszul-type complex (see \eqref{isom:twistedbysgn}) whose cohomology produce \eqref{spectralsequence}. 


A minor technical note:  we work in the derived $\infty$-category of constructible sheaves with coefficients in $\Q$-vector spaces, as developed in \cite{GL19, Lurie17}, adopting the framework of six-functor formalism by Liu-Zheng \cite[\S 9.3]{LZ24}. While we employ $\infty$-categorical formalism, it is largely cosmetic—most of the proof of Theorem \ref{theorem} translates seamlessly into the language of the triangulated derived category of constructible sheaves. The only caveat is the need to analyze group invariants of objects that traditionally reside in traingulated derived category of constructible sheaves. The $\infty$-categorical language offers a cleaner presentation of our core ideas, avoiding technical distractions that are well-established in the literature.

Our estimate of $r(\d)$ in \eqref{eq:rdbound} follows directly from Angehrn-Siu's work on a conjecture of Fujita (see \cite[Theorem 0.1]{AS95}).

\paragraph{Acknowledgement.} \textnormal{My deepest thanks to Robert Lazarsfeld for sharing his expertise on positivity properties of line bundles.}

\section{The space of morphisms from $X$ to $Y$} To clearly convey the key ideas, we begin by developing our framework for the special case where $Y=\P^N$.
In \S\ref{subsection:morXY} we will generalize these methods to the case of an arbitrary polarized smooth projective variety $Y$. Throughout this section, $X$ is a smooth projective variety of dimension $n$ over a fixed algebraically closed field of characteristic $0$.
 
\subsection{Preliminaries on positivity of line bundles}
Positivity of line bundles can be studied from various angles: from the concept of separation of points, jets at points, cohomological vanishing theorems, etc, and they are all deeply intertwined. In this subsection we record some definitions and facts pertaining to Theorem \ref{theorem}.

 \begin{defn}\label{def:r-separating}
Let $r$ be a positive integer. We say a line bundle $L$ separates $r$ points if it is ample and its global sections separate any set for $r$ points in $X$ i.e. for all $Z\subset X$ with $|Z|=r$, one has \[H^1(L\otimes \mathcal{I}_Z)=0\] or equivalently, if the restriction map 
\[H^0(X,L)\twoheadrightarrow H^0(X,L\otimes \mathcal{O}_Z)\] is surjective, 
where $\mathcal{I}_Z$ and $\mathcal{O}_Z$ denote the ideal sheaf and structure sheaf of $Z$ respectively.

We say a numerical class $\d\in N^1(X)$ separates $r$ points if it is ample and every line bundle in $L\in\pic_{\d}(X)$ separates $r$ points.
	\end{defn}

	Let us now record a theorem by Angehrn-Siu (see \cite[Theorem 0.3]{AS95}), suitably paraphrased to meet our requirements:\begin{theorem}\label{theorem:AS}
Let $r$ be a positive integer. If \begin{equation}\label{ineq:AS}
(L^k \cdot [W])^{\frac{1}{k}}>\frac{1}{2}n(n+2r-1)
\end{equation}
for any irreducible subvariety $W$ of dimension $1 \leq  k \leq n=\dim X$ in $X$, then the
global holomorphic sections of $L \otimes K_X$ over $X$ separate any set of $r$ distinct
points of $X$.
	\end{theorem}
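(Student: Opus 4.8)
The plan is to reduce the separation statement to a single cohomology vanishing and then to engineer that vanishing through a singular Hermitian metric on $L$ whose multiplier ideal is concentrated, as a reduced scheme, exactly at the $r$ points one wishes to separate. Fix a reduced set $Z = \{p_1, \ldots, p_r\} \subset X$. By Definition \ref{def:r-separating} it suffices to prove the surjectivity of the restriction $H^0(X, L\otimes K_X) \to H^0(Z, (L\otimes K_X)|_Z)$, equivalently $H^1\!\left(X, (L \otimes K_X) \otimes \mathcal{I}_Z\right) = 0$, and this I would obtain from Nadel's vanishing theorem: if $h = e^{-\varphi}$ is a singular metric on $L$ with curvature current strictly bounded below by a Kähler form and with multiplier ideal $\mathcal{J}(\varphi) = \mathcal{I}_Z$, the $p_i$ forming the entire reduced, isolated cosupport, then $H^1(X, (L\otimes K_X)\otimes \mathcal{J}(\varphi)) = 0$. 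Tensoring the sequence for $\mathcal{O}_X/\mathcal{J}(\varphi)\cong \mathcal{O}_Z$ with $L\otimes K_X$ then forces $H^0(X, L\otimes K_X) \twoheadrightarrow \bigoplus_i (L\otimes K_X)|_{p_i}$ to be surjective, which is precisely separation of $Z$.

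To build $\varphi$ I would use the method of creating singularities. First, ampleness of $L$ together with the numerical hypothesis \eqref{ineq:AS} applied to $W = X$ guarantees, for a large multiple $mL$, global sections vanishing to prescribed order at each $p_i$: vanishing to order $a$ at a point imposes $\binom{a+n-1}{n}$ conditions, while $h^0(X,mL) \sim m^n(L^n)/n!$, so enough singular sections are available to start the process. Averaging the corresponding divisors yields an effective $\mathbb{Q}$-divisor $\Delta \equiv \lambda L$ with $\lambda < 1$ having high multiplicity along $Z$; after rescaling $\Delta$ by its log canonical threshold, the pair $(X,\Delta)$ becomes strictly log canonical at the $p_i$.

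The obstacle that now appears — and the technical heart of Angehrn--Siu's argument — is that the minimal log canonical center $W$ through a given $p_i$ need not be a point. I would handle this by induction on $\dim W$: if $\dim W = k > 0$, I restrict to $W$ and exploit the positivity of $L|_W$, now measured by the restricted intersection number $(L^k\cdot [W])^{1/k}$, for which the hypothesis \eqref{ineq:AS} for this particular $W$ again supplies enough sections on $W$. Lifting these to $X$, adding them to $\Delta$ with a small coefficient, and performing a tie-breaking perturbation to restore uniqueness of the minimal center lets me lower $\dim W$ by at least one. Iterating at most $n$ times collapses the center onto the points, and a final tie-breaking step arranges $\mathcal{J}(\varphi) = \mathcal{I}_Z$ near $Z$.

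I expect the main difficulty to be the coefficient bookkeeping across this descent. At each stage the accumulated multiplicities of $\Delta$ must leave the curvature current strictly positive — as Nadel vanishing demands — while still dropping the dimension of the log canonical center, and the successive perturbations must keep the centers nested so that they isolate the $p_i$ rather than propagate positivity over the rest of $X$. The sharp constant is what makes this delicate: the positivity cost charged when the center descends through dimension $k$ is of order $(k-1)+r$, and summing over $k = n, \ldots, 1$ gives $\sum_{k=1}^n [(k-1)+r] = \tfrac12 n(n+2r-1)$, which is exactly the right-hand side of \eqref{ineq:AS} (and specializes for $r=1$ to the base-point-freeness bound $\tfrac12 n(n+1)$). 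A crude parameter count only yields some finite bound; extracting this optimal one is the crux.
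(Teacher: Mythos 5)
First, note that the paper does not prove this statement at all: it is imported verbatim (``suitably paraphrased'') from Angehrn--Siu \cite[Theorem 0.3]{AS95}, so there is no internal proof to compare against. What you have written is a roadmap of the original Angehrn--Siu argument, and as a roadmap it is accurate: the reduction to $H^1(X,(L\otimes K_X)\otimes\mathcal{I}_Z)=0$, Nadel vanishing for a singular metric whose multiplier ideal has the $p_i$ in its cosupport as isolated points, the construction of that metric by averaging highly singular divisors, the induction on the dimension of the minimal log canonical center, and the tie-breaking perturbations are exactly the ingredients of \cite{AS95} (and of the algebraic renditions of it). Your arithmetic $\sum_{k=1}^{n}\bigl((k-1)+r\bigr)=\tfrac{1}{2}n(n+2r-1)$ is consistent with the stated constant. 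One small correction: you do not need, and in general cannot arrange, $\mathcal{J}(\varphi)=\mathcal{I}_Z$ as a reduced ideal; it suffices that $\mathcal{J}(\varphi)\subseteq\mathcal{I}_Z$ with each $p_i$ isolated in the cosupport, since the surjection onto sections of the (possibly non-reduced) quotient scheme still surjects onto the fibres $\kappa(p_i)$, which is all that separation of points requires.

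The genuine gap is the inductive step itself, which you compress into ``restrict to $W$, \ldots, lift these to $X$.'' A minimal log canonical center $W$ is in general singular and possibly non-normal, so (i) producing sections of a multiple of $L|_W$ with prescribed singularities from the hypothesis $(L^k\cdot[W])^{1/k}>\tfrac{1}{2}n(n+2r-1)$ requires an asymptotic Riemann--Roch argument on such a $W$, and (ii) extending those sections from $W$ to $X$ is the hard input (an Ohsawa--Takegoshi-type $L^2$ extension in the analytic proof, or delicate vanishing arguments in the algebraic versions); neither is routine. Likewise the ``coefficient bookkeeping'' you defer --- keeping the curvature current strictly positive while the accumulated coefficients grow through up to $n$ stages of descent, and keeping the perturbed centers nested so that the singularity of the metric stays isolated at the $p_i$ --- is not an afterthought but is precisely where the sharp constant is earned; a naive version of your dimension count only yields some finite bound, as you yourself say. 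So the proposal is a correct identification of the known proof strategy rather than a proof; for the purposes of this paper that is moot, since the theorem is invoked as a black box to produce the estimate \eqref{eq:rdestimate}.
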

	Observe that by \cite[Proposition 20.1.4]{Vakil15}, the intersection number in the inequality \eqref{ineq:AS} depends only on the numerical equivalence class of $L$. 
	
\begin{defn}
	We say a line bundle $L$ is \emph{acyclic} is $H^i(X,L)=0$ for all $i>0$. We a numerical equivalence class $\d\in N^1(X)$ is acyclic if $H^i(X,L)=0$ for all $i>0$ and all $L\in \pic_{\d}(X)$.
\end{defn}
\noindent Note that acyclicity is not a numerical property. We now prove a simple but important lemma.
	\begin{lemma}\label{lemma:r-separating}
	Let $r$ be a positive integer.	Then there exists $\d\in N^1(X)$ such that $\d$ separates $r$ points and is acyclic.
	\end{lemma}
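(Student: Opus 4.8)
The plan is to realize $\d$ in \emph{adjoint form}, writing $\d = \mu + c_1(K_X)$ where $\mu \in N^1(X)$ is a sufficiently positive ample class. This shape is dictated by the two tools at hand: Theorem~\ref{theorem:AS} controls separation of points precisely for adjoint bundles $M \otimes K_X$ through a hypothesis on $M$ that is \emph{numerical} (the intersection numbers in \eqref{ineq:AS}), while Kodaira vanishing controls acyclicity precisely for adjoint bundles $M \otimes K_X$ with $M$ ample. Since any $L \in \pic_{\d}(X)$ can be written as $L = M' \otimes K_X$ with $M' := L \otimes K_X^{-1} \in \pic_{\mu}(X)$, both tools will then apply uniformly across the entire numerical equivalence class once $\mu$ is chosen large enough.

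Concretely, I would fix an ample class $H \in N^1(X)$ and set $\mu := t\, c_1(H)$ for a large integer $t$, so that $\d := t\, c_1(H) + c_1(K_X)$. Three things must be arranged, all of which hold for $t \gg 0$. First, $\mu$ is ample and, by openness of the ample cone, $\d = \mu + c_1(K_X)$ is ample for $t$ large (as $\tfrac{1}{t}\d \to c_1(H)$). Second, the Angehrn--Siu inequality \eqref{ineq:AS} must hold for $\mu$: for every irreducible $W \subseteq X$ of dimension $k$ one has $(\mu^k \cdot [W])^{1/k} = t\,(c_1(H)^k \cdot [W])^{1/k}$, and because $H$ is ample the intersection number $c_1(H)^k \cdot [W]$ is a positive integer, hence $\geq 1$; thus $(\mu^k \cdot [W])^{1/k} \geq t$ uniformly in $W$ and $k$, and choosing $t > \tfrac{1}{2} n(n + 2r - 1)$ forces \eqref{ineq:AS} for all $1 \leq k \leq n$ at once.

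With $\d$ so chosen, I would verify the two required properties for an arbitrary $L \in \pic_{\d}(X)$. For separation: set $M' = L \otimes K_X^{-1}$, so $c_1(M') = \mu$; the hypothesis \eqref{ineq:AS} depends only on the numerical class of $M'$, hence holds, and Theorem~\ref{theorem:AS} yields that $L = M' \otimes K_X$ separates any $r$ distinct points, which together with ampleness of $\d$ gives that $\d$ separates $r$ points in the sense of Definition~\ref{def:r-separating}. For acyclicity: since ampleness is a numerical property, $M'$ is ample, so Kodaira vanishing gives $H^i(X, L) = H^i(X, K_X \otimes M') = 0$ for all $i > 0$; as $L$ was arbitrary, $\d$ is acyclic.

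The main subtlety --- and the reason the adjoint reformulation is forced rather than cosmetic --- is that ``separating $r$ points'' is \emph{not} a numerical property of a line bundle, so one cannot directly demand a numerical class all of whose members separate points. The device that resolves this is that the Angehrn--Siu \emph{hypothesis} is numerical even though its conclusion is not: by absorbing $K_X$ into the bundle, every $L \in \pic_{\d}(X)$ inherits the separation conclusion from the single numerical condition imposed on $\mu$. The only remaining point of care is the uniform lower bound on $(\mu^k \cdot [W])^{1/k}$ over the infinitely many subvarieties $W$ and all $k$, which is immediate from the integrality and positivity of intersection numbers against an ample class.
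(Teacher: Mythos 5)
Your proposal is correct and follows essentially the same route as the paper: write $\d$ in adjoint form $c_1(K_X)+\delta$ with $\delta$ a sufficiently positive ample class, then use Angehrn--Siu (whose hypothesis is numerical) for point-separation and Kodaira vanishing for acyclicity. Your version merely makes explicit two points the paper leaves implicit --- the uniform lower bound $(\mu^k\cdot[W])^{1/k}\geq t$ via integrality of intersection numbers against an ample class, and the ampleness of $\d$ itself --- which is a welcome refinement but not a different argument.
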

	\begin{proof}
	Pick $\delta \in N^1(X)$ ample. Tensoring up if necessary, and in view of the observation that \eqref{ineq:AS} depends only on the numerical equivalence class of $L$, we can ensure that $\delta$ satisfies the inequality \eqref{ineq:AS} on the intersection numbers in Theorem \ref{theorem:AS}.
		
		
		Because $\delta$ is ample, by the Kodaira vanishing theorem (\cite{Kodaira53}), for all $L\in \pic_{\delta}(X)$ one has $H^i(X,K_X\otimes L)=0$ for all  $i>0$. Furthermore, since $\delta$ has been chosen to satisfy the inequality \eqref{ineq:AS}, $L\otimes K_X$ separates $r$ points by Theorem \ref{theorem:AS}, for all $L\in \pic_{\delta}(X)$. Then the numerical equivalence class given by $\d :=c_1(K_X)+\delta$ satisfies the conclusion of the lemma.
	\end{proof}
	
		\noindent\myparagraph For the rest of the paper we fix $\delta\in N^1(X)$ ample, such that $\d=c_1(K_X)+\delta$ is acyclic and separates $r$ points (we know such a $\d$ exists by Lemma \ref{lemma:r-separating}), and denote by $r(\d)$ the maximum number of points $\d$ separates i.e. the maximum $r$ for which $\d$ separates $r$ points. By Angehrn-Siu's estimates (see \eqref{ineq:AS}), we have \begin{equation}\label{eq:rdestimate}
		r(\d) =\left\lfloor\min_{\substack{[W]\in \mathrm{CH}_k(X),\\ 1\leq k\leq n}} \frac{2(\delta^k.[W])^{\frac{1}{k}}-n^2+n-1}{2}\right\rfloor .
	\end{equation}
	
	\subsection{Geometry and topology of $\mor_{\d}(X,\P^N)$} In this subsection we construct a natural compactification of $\mor_{\d}(X,\P^N)$, and a hypercover augmented over its `discriminant locus'.
	
	\noindent \myparagraph Let $\d$ be as above. The space $\mor_{\d}(X,\P^N)$ is given by: 
		\begin{align}
\mor_{\d}(X,\P^N)=\bigg\{(L,[s_0:\ldots : s_N]): L\in \pic_{\d}(X), s_i\in \Gamma(X,L), 0\leq i\leq N, \nonumber\\ \bigcap_{0\leq i\leq N}\mathrm{div}(s_i)=\emptyset\bigg\}
		\end{align}
	
\noindent	Define the following space, which we dub as the `discriminant locus': \begin{align}\label{def:discriminant}
\Z_{\d}(X,\P^N):= \bigg\{(L,[s_0:\ldots: s_N]): L\in \pic_{\d}(X), s_i\in \Gamma(X,L), 0\leq i\leq N, \nonumber\\ \bigcap_{0\leq i\leq N}\mathrm{div}(s_i)\neq\emptyset\bigg\}.
	\end{align} Whereas morphisms $X\to \P^N$ are given by basepoint free $(N+1)$-tuples of global sections of line bundles in $\pic_{\d}(X)$, the space $\Z_{\d}(X,\P^N)$ is given by $(N+1)$-tuples of global sections that vanish simultaneously at some point in $X$. 
	
\subsubsection{Compactification of $\mor_{\d}(X,\P^N)$}\label{para:Poincare}	Let $\Pc(\d)$ denote the Poincare bundle on $X\times \pic_{\d}(X)$; denoting by \begin{equation}\label{map:pushforwardPoincare}
		\nu:X\times \pic_{\d}(X)\to \pic_{\d}(X)
	\end{equation} the projection to the second factor $\pic_{\d}(X)$. Note that $\d$ being acyclic implies $\nu_*\Pc(\d)$ is a locally free sheaf, and equivalently, a vector bundle on $\pic_{\d}(X)$, of rank 
	\begin{gather}\label{eq:N_d} N_d:= \int_X \mathrm{ch}(L)\,\, \mathrm{td}(X)\end{gather} 
	given by the Hirzebruch-Riemann-Roch theorem (see \cite{Hirzebruch1978}; note that $\mathrm{ch}(L)$ depends only on $\d$, and $td(X)$, the Todd class of $X$, is a geometric property of $X$). The fibre of $\nu_*\Pc(\d)^{\oplus(N+1)}$ over a point $L\in\pic_{\d}(X)$ is $\Gamma(X,L)$. In turn, we can now see that there is a natural open immersion \begin{gather}\label{map:openimmersion}
\jfrak: \mor_{\d}(X,\P^N) \hookrightarrow \P(\nu_*\Pc(\d)^{\oplus(N+1)})
	\end{gather} the latter being the (relative) projectivization of the vector bundle $\nu_*(\Pc(\d))^{\oplus (N+1)}$ over $\pic_{\d}(X)$, and the complement of (the image of) $\mor_{\d}(X,\P^N)$  in $\P(\nu_*\Pc(\d)^{\oplus(N+1)})$ is $\Z_{\d}(X,\P^N)$. Let us denote $\P(\nu_*\Pc(\d)^{\oplus(N+1)})$ by $\X_{-1}$ henceforth; in other words, the points of $\X_{-1}$ are described by:\begin{gather}\label{def:X_{-1}}
\X_{-1}=\bigg\{(L,[s_0:\ldots: s_N]): L\in \pic_{\d}(X), s_i\in \Gamma(X,L), 0\leq i\leq N\bigg\}.
	\end{gather}

\noindent\myparagraph\textbf{Hypercover over $\Z_{\d}(X,\P^N)$} For each $r\geq 0$, define the following spaces:
	\begin{align}\label{def:Xr}
		\X_r:= \bigg\{\big((L,[s_0:\ldots: s_N]), (x_0,\ldots, x_r)\big): L\in \pic_{\d}(X), s_i\in \Gamma(X,L), 0\leq i\leq N, \nonumber\\ x_j\in \bigcap_{0\leq i\leq N}\mathrm{div}(s_i), \text{ for all }0\leq j\leq r\bigg\}\nonumber\\\subset X^{r+1}\times \Z_{\d}(X,\P^N)
	\end{align} We now aim to understand the geometry of the spaces $\X_r$. For starters, we prove the following:
	\begin{lemma}\label{lemma:X0}
		The space $\X_0$ is a smooth projective variety.
	\end{lemma}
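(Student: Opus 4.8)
The plan is to exhibit $\X_0$ as the relative projectivization of a vector bundle over the smooth projective base $X\times\pic_\d(X)$, and then invoke the fact that such projectivizations are smooth and projective. First I would consider the projection $\pi\colon \X_0\to X\times\pic_\d(X)$ sending $\big((L,[s_0:\ldots:s_N]),x_0\big)\mapsto (x_0,L)$. Writing $W:=X\times\pic_\d(X)$ and $\nu\colon W\to\pic_\d(X)$ for the projection (which agrees with the map in \eqref{map:pushforwardPoincare} under the evident identification of factors), the fibre of $\X_0$ over a point $(x_0,L)$ is the projective space of $(N+1)$-tuples $(s_0,\ldots,s_N)\in\Gamma(X,L)^{\oplus(N+1)}$, not all zero, subject to the conditions $s_i(x_0)=0$ for every $i$.

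The key is to organize these fibres into a vector bundle. Consider the counit (evaluation) morphism $\mathrm{ev}\colon \nu^*\nu_*\Pc(\d)\to \Pc(\d)$ on $W$; at a point $(x_0,L)$ it is precisely the evaluation $\Gamma(X,L)\to L_{x_0}$, $s\mapsto s(x_0)$, since the fibre of $\Pc(\d)$ at $(x_0,L)$ is $L_{x_0}$. Because $\d$ is acyclic, $\nu_*\Pc(\d)$ is locally free (as already noted after \eqref{map:pushforwardPoincare}), so the source of $\mathrm{ev}$ is a vector bundle of rank $N_d$ and the target is the line bundle $\Pc(\d)$. Because $\d$ separates at least one point, every $L\in\pic_\d(X)$ is globally generated, so $\mathrm{ev}$ is surjective on every fibre; a fibrewise-surjective morphism of vector bundles is surjective with locally free kernel, so $\mathcal{K}:=\ker(\mathrm{ev})$ is a vector bundle on $W$ of rank $N_d-1$. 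By construction the fibre of $\mathcal{K}^{\oplus(N+1)}$ over $(x_0,L)$ is exactly $\{(s_0,\ldots,s_N): s_i(x_0)=0\ \forall i\}$, whence $\X_0\cong\P_W(\mathcal{K}^{\oplus(N+1)})$ as schemes over $W$, compatibly with the inclusion into $X\times\X_{-1}=\P_W\big((\nu^*\nu_*\Pc(\d))^{\oplus(N+1)}\big)$.

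It then remains to observe that $W$ is smooth and projective: $X$ is smooth projective by hypothesis, and $\pic_\d(X)$ is a torsor under the abelian variety $\pic^0(X)$, hence itself smooth and projective. The relative projectivization $\P_W(\mathcal{E})$ of any vector bundle $\mathcal{E}$ over a smooth projective variety $W$ is smooth, being a Zariski-locally trivial projective-space bundle, and projective, since its structure map is proper with $\O(1)$ relatively ample. Taking $\mathcal{E}=\mathcal{K}^{\oplus(N+1)}$ yields that $\X_0$ is a smooth projective variety.

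The main obstacle — indeed essentially the only substantive point — is verifying that $\mathcal{K}$ is genuinely locally free rather than merely a coherent kernel sheaf, and this is exactly where the hypotheses enter. Acyclicity of $\d$ is what makes $\nu_*\Pc(\d)$ a vector bundle (via cohomology and base change, the fibrewise dimension $h^0(X,L)$ being the constant $N_d$), and the point-separation hypothesis $r(\d)\geq 1$ is precisely what forces $\mathrm{ev}$ to be fibrewise surjective of constant rank, so that its kernel is locally free. Once this constant-rank statement is in hand, the remaining assertions are formal.
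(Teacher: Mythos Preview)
Your proof is correct and follows the same overall strategy as the paper: exhibit $\X_0$ as the projectivization $\P\big(\E_0^{\oplus(N+1)}\big)$ of a vector bundle over the smooth projective base $X\times\pic_{\d}(X)$. The paper constructs this bundle as $\E_0:=(\mathrm{pr}_{13})_*\big(\mathrm{pr}^*_{12}\mathcal{I}_{D_0(X)}\otimes\mathrm{pr}^*_{23}\Pc(\d)\big)$ and verifies local freeness via Grauert's base-change theorem, after checking that the higher direct images vanish using acyclicity and the long exact sequence associated to $0\to\I_Z\to\O_X\to\O_Z\to 0$. Your sheaf $\mathcal{K}$ is the very same object, but you establish its local freeness by the more elementary route of placing it in the short exact sequence $0\to\mathcal{K}\to\nu^*\nu_*\Pc(\d)\xrightarrow{\mathrm{ev}}\Pc(\d)\to 0$, where the right-hand map is fibrewise surjective by global generation and the middle and right terms are already vector bundles; the kernel of a surjection of vector bundles is then automatically locally free. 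This packaging isolates cleanly where each hypothesis enters (acyclicity for the local freeness of $\nu_*\Pc(\d)$, and $r(\d)\geq 1$ for surjectivity of $\mathrm{ev}$) and avoids invoking Grauert directly. The paper's pushforward-of-ideal-sheaf formulation, by contrast, is set up to generalize verbatim to the construction of $\E_r$ over $X^{r+1}\times\pic_{\d}(X)$ in Lemma~\ref{lemma:Xr}, where the diagonal strata force a genuinely stratified bundle.
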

	\begin{proof}[Proof of Lemma \ref{lemma:X0}]
Let \begin{gather}
\mathrm{pr}_{23}:X\times X\times \pic_{\d}(X)\to X\times \pic_{\d}(X)
\end{gather} be the projection to the last two factors, \begin{gather}
\mathrm{pr}_{13}:X\times X\times \pic_{\d}(X)\to X\times \pic_{\d}(X)
\end{gather} be the projection to the first and third factors, and \begin{gather}
\mathrm{pr}_{23}:X\times X\times \pic_{\d}(X)\to X\times X
\end{gather} be the projection to the first two factors. Let $D_0(X)$ denote the diagonal in $X\times X$. Then \begin{gather}\label{edf:E0}
\E_0 := (\mathrm{pr}_{13})_{*}(\mathrm{pr}^*_{12}\mathcal{I}_{{D_0}(X)}\otimes \mathrm{pr}^*_{23}\Pc(\d)).
\end{gather} where  $\mathcal{I}_{{D_0}(X)}$ is the ideal sheaf of the diagonal $D_0(X)$. That $\E_0$ is a locally free sheaf is a simple consequence of Grauert's base change theorem (whose proof is complex analytic) or its algebraic rendition by Grothendieck (see \cite[Theorem 12.11]{Harshorne77} or \cite[EGA III 7.7]{Grothendieck63}). Indeed, first note that $\mathrm{pr}_{13}$ is a flat projective morphism, and $D_0(X)\times \pic_{\d}(X)$ is flat over $X\times \pic_{\d}(X)$. Then, letting $$\F:=\mathrm{pr}^*_{12}\mathcal{I}_{{D_0}(X)}\otimes \mathrm{pr}^*_{23}\Pc(\d)$$  note that \begin{gather}
R^i{\mathrm{pr}_{13}}_*\F=0 \text{ for all } i>0
\end{gather} because its stalks are $H^i(X,L\otimes \I_{x})$ for a point $(x,L)\in X\times \pic_{\d}(X)$. Now, for any finite subset $Z\subset X$ with $\# Z\leq r$, consider the short exact sequence of coherent sheaves \begin{gather}\label{seq:I_Z}
0\to I_Z\to \mathcal{O}_X\to \O_Z\to 0,
\end{gather}tensor with $L$ throughout and take cohomology to obtain \begin{gather}\label{leq:I_ZL}
0\to H^0(X,L\otimes \I_Z)\to H^0(X,L)\to H^0(X,L\otimes \O_Z)\to \cdots \\\cdots\to H^i(X,L\otimes \I_Z)\to H^i(X,L) \to H^i(X,L\otimes \O_Z)\to \cdots\nonumber
\end{gather}Noting that $H^i(X,L\otimes \O_Z)=0$ for all $i>0$, ($L\otimes \O_Z$ are skyscraper sheaves), we obtain $$H^i(X,L\otimes \I_Z)\xrightarrow{\cong} H^i(X,L)=0$$ for all $i>0$, where the latter isomorphism to $0$ is because $\d$ is acyclic. Taking $r=1$ we obtain $\E_0$ is locally free over $X\times \pic_{\d}(X)$: the fibre of the corresponding vector bundle at a point $(x,L)\in X\times \pic_{\d}(X)$ is $\Gamma(X,L\otimes \I_x)$, i.e. global sections of $L$ that vanish at $x$.

Finally, the observation that $\X_0$ is, by definition,  the projectivization of the vector bundle $\bigoplus_{i=0}^{N}\E_0$ over $X\times\pic_{\d}(X)$, concludes the proof of the lemma.

\end{proof}

\noindent Thanks to Lemma \ref{lemma:X0}, the natural map\begin{gather}
\X_0\to \X_{-1}\nonumber \\ \big((L,[s_0:\ldots: s_N]), x\big)\mapsto (L,[s_0:\ldots: s_N])
\end{gather}  is projective, and hence proper; furthermore, its image is clearly $\Z_{\d}(X,\P^N)$, and $$\X_0\twoheadrightarrow \Z_{\d}(X,\P^N)$$ is birational---because a generic element in $\Z_{\d}(X,\P^N)$ is an $(N+1)$-tuple of sections in $\Gamma(X,L)$ for some $L\in \pic_{\d}(X)$ that have exactly one common zero, which, in turn, is because by our assumption $r(\d)\geq 1$, i.e. $L$ is very ample, and one can always choose $(N+1)$ hyperplane sections of the embedding of $X$ by the complete linear system $|L|$ such that those $(N+1)$ hyperplanes intersect at exactly one point--- and since $\X_0$ is smooth, we have now concluded the following:
\begin{lemma}\label{lemma:ResolutionOfSingularities}
The natural map $\X_0 \to \Z_{\d}(X,\P^N)$ is a resolution of singularities.
\end{lemma}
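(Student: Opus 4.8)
The plan is to combine the two facts already assembled in the excerpt: that $\X_0$ is a smooth projective variety (Lemma \ref{lemma:X0}), and that the natural map $\X_0 \to \Z_{\d}(X,\P^N)$ is proper, surjective, and birational (established in the paragraph immediately preceding the statement). A resolution of singularities, in the sense needed here, is precisely a proper birational morphism $\pi: \widetilde{Z} \to Z$ with $\widetilde{Z}$ smooth; so in fact every ingredient has already been verified, and the proof should be little more than an assembly of these observations into a single statement.

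Concretely, I would proceed as follows. First, record that $\X_0$ is smooth by Lemma \ref{lemma:X0}, so the source is nonsingular. Second, recall that $\X_0$ is the projectivization of a vector bundle $\bigoplus_{i=0}^N \E_0$ over the projective base $X \times \pic_{\d}(X)$; since relative projectivizations of vector bundles over a projective base are projective, and the structure map $\X_0 \to \X_{-1}$ factors through this projective morphism, the map $\X_0 \to \Z_{\d}(X,\P^N)$ is projective, hence proper. Third, identify the image as exactly $\Z_{\d}(X,\P^N)$: a point $((L,[s_0:\ldots:s_N]),x)$ maps to a tuple with a common zero at $x$, and conversely every element of the discriminant locus has at least one common zero, so the image is all of $\Z_{\d}(X,\P^N)$ and the map is surjective. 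Fourth, establish birationality by exhibiting a dense open locus over which the map is an isomorphism: the generic element of $\Z_{\d}(X,\P^N)$ is an $(N+1)$-tuple with exactly one common zero---here one invokes the hypothesis $r(\d) \geq 1$, so that $L$ is very ample and one may choose $N+1$ hyperplane sections in $|L|$ meeting at a single point---and over this locus the fibre of $\X_0$ is a single point, so $\pi$ is generically one-to-one and hence birational.

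The only genuinely substantive point, and the step I expect to require the most care, is the verification that the generic fibre is a single reduced point, i.e. that the common-zero locus of a generic $(N+1)$-tuple of sections is exactly one point. This is where the positivity hypothesis $r(\d) \geq 1$ enters in an essential way: very ampleness gives an embedding $X \hookrightarrow \P(\Gamma(X,L)^{\smvee})$ under which the sections $s_i$ correspond to hyperplanes, and one needs that a generic choice of $N+1$ of these hyperplanes intersects $X$ in precisely one point. Since $N = \dim|\Upsilon|$ and the relevant dimension count forces the base locus of a generic pencil-like configuration to be zero-dimensional, a Bertini-type transversality argument over the algebraically closed characteristic-zero ground field shows the generic intersection is a single reduced point. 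Granting this, birationality and hence the resolution-of-singularities claim follow immediately, and no further computation is needed.
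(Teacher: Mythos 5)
Your proposal is correct and follows essentially the same route as the paper: smoothness of $\X_0$ from Lemma \ref{lemma:X0}, properness via projectivity of the bundle projection to $\X_{-1}$, surjectivity onto the discriminant locus, and birationality from the observation that a generic element of $\Z_{\d}(X,\P^N)$ has exactly one common zero because $r(\d)\geq 1$ makes $L$ very ample. The extra care you flag about the generic fibre being a single reduced point is handled in the paper at the same (brief) level of rigor, so there is no substantive difference.
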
 

\noindent Observe that for all $r\geq 0$, we have, by its very definition, 
\begin{gather}\label{def:XrFibreProduct}
\X_r= \underbrace{\X_0\times_{\X_{-1}} \X_0 \times_{\X_{-1}}\times \cdots\times_{\X_{-1}}\X_0}_{r+1}, 
\end{gather} and that there are natural maps \begin{gather}\label{map:pi_r}
\pi_r: \X_r\to \Z_{\d}(X,\P^N) \\ \big((L,[s_0:\cdots: s_N]), (x_0,\cdots, x_r)\big)\mapsto (L,[s_0:\cdots: s_N]). \nonumber
\end{gather}
Recalling the standard fact that proper maps are stable under base change \cite[Corollary 4.8]{Harshorne77}, paired with our earlier observation that $\X_0\to \X_{-1}$ is projective, and hence proper, we come to the following conclusion:

\begin{lemma}\label{lemma:hypercover}
The simplicial scheme $\pi_{\bullet}:\X_{\bullet} \to \Z_{\d}(X,\P^N)$ is a proper hypercover augmented over $\Z_{\d}(X,\P^N)$, with the $i^{th}$ face maps $\X_r\to \X_{r-1}$ given by forgetting the $i^{th}$-factor from the expression \eqref{def:XrFibreProduct}, and the degeneracy maps given by the diagonal embeddings.
\end{lemma}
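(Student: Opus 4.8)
The plan is to recognize the augmented simplicial scheme $\pi_\bullet : \X_\bullet \to \Z_\d(X,\P^N)$ as the \v{C}ech nerve (equivalently, the relative $0$-coskeleton) of the single augmentation $\pi_0 : \X_0 \to \Z_\d(X,\P^N)$, and then to invoke the standard criterion for such a simplicial scheme to be a proper hypercover. First I would observe that the natural map $\X_0 \to \X_{-1}$ factors through the closed immersion $\Z_\d(X,\P^N) \hookrightarrow \X_{-1}$, since $\Z_\d(X,\P^N)$ is exactly its (closed) image; consequently every fibre product over $\X_{-1}$ appearing in \eqref{def:XrFibreProduct} coincides with the corresponding fibre product over $\Z_\d(X,\P^N)$, so that $\X_r = \X_0 \times_{\Z_\d(X,\P^N)} \cdots \times_{\Z_\d(X,\P^N)} \X_0$ with $r+1$ factors. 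This exhibits $\X_\bullet = \mathrm{cosk}_0\bigl(\X_0 / \Z_\d(X,\P^N)\bigr)$, and the defining universal property of the \v{C}ech nerve yields at once that the $i$-th face map \eqref{map:pi_r} is the projection forgetting the $i$-th factor while the $j$-th degeneracy is the diagonal on the $j$-th factor, exactly as asserted.

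Next I would recall that an augmented simplicial scheme $\X_\bullet \to Z$ is a proper hypercover precisely when, for every $n \geq -1$, the canonical comparison morphism $\X_{n+1} \to (\mathrm{cosk}_n \X_\bullet)_{n+1}$ is proper and surjective, the case $n=-1$ being the augmentation $\X_0 \to Z$ itself. For $n=-1$ the relevant map is $\pi_0 : \X_0 \to \Z_\d(X,\P^N)$, which is proper because $\X_0 \to \X_{-1}$ is projective by Lemma \ref{lemma:X0}, and surjective because it is a resolution of singularities by Lemma \ref{lemma:ResolutionOfSingularities}. For $n \geq 0$ the comparison maps are in fact isomorphisms: the $0$-coskeleton is $n$-coskeletal for every $n \geq 0$, since $\mathrm{cosk}_n \mathrm{cosk}_0 = \mathrm{cosk}_0$, so that $\X_{n+1} \xrightarrow{\sim} (\mathrm{cosk}_n \X_\bullet)_{n+1}$; and isomorphisms are trivially proper and surjective. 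Together these verify all of the hypercover conditions at one stroke.

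Finally, to record that the structure maps themselves are proper—so that Deligne's cohomological-descent machinery applies—I would note that each face map $\X_r \to \X_{r-1}$, being projection off a single factor, is the base change of the proper map $\pi_0$ along $\X_{r-1} \to \Z_\d(X,\P^N)$; since proper morphisms are stable under base change \cite[Corollary 4.8]{Harshorne77}, every $\pi_r$ (a composite of such projections) is proper as well, and the augmentation is proper. The only non-formal inputs are already in hand, namely the properness and surjectivity of $\pi_0$ and the fibre-product description \eqref{def:XrFibreProduct}; the remaining work is purely the coskeleton bookkeeping. I expect the one point requiring care to be exactly the identification of $\X_\bullet$ as a \v{C}ech nerve \emph{over $\Z_\d(X,\P^N)$} rather than over $\X_{-1}$, as this is what collapses the infinitely many hypercover conditions down to the single statement that $\pi_0$ is proper and surjective.
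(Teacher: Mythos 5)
Your proposal is correct and follows essentially the same route as the paper: the paper likewise deduces the lemma from the fibre-product description \eqref{def:XrFibreProduct} of $\X_r$ (i.e.\ the \v{C}ech-nerve structure), the properness of the projective map $\X_0\to\X_{-1}$ whose image is $\Z_{\d}(X,\P^N)$, and the stability of proper maps under base change. You have merely made explicit the coskeleton bookkeeping (that the comparison maps are isomorphisms for $n\geq 0$, reducing everything to properness and surjectivity of $\pi_0$) which the paper leaves implicit.
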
 
\noindent 	The proof of Lemma \ref{lemma:X0} translates almost verbatim to give us the geometry of $\X_r$ for any $r\leq r(\d)$. For $0<r\leq r(\d)$, the schemes $\X_r$ are not smooth, however, they are naturally projectivizations of coherent sheaves that are locally free over the locally closed strata provided by the diagonals in $X^r$. Indeed, let \begin{gather}
	\mathrm{pr}_{23}:X^{r+1}\times X\times \pic_{\d}(X)\to X\times \pic_{\d}(X)
\end{gather} be the projection to the last two factors, \begin{gather}
	\mathrm{pr}_{13}:X^{r+1}\times X\times \pic_{\d}(X)\to X^{r+1}\times \pic_{\d}(X)
\end{gather} be the projection to the first and third factors, and \begin{gather}
	\mathrm{pr}_{23}:X^{r+1}\times X\times \pic_{\d}(X)\to X^{r+1}\times X
\end{gather} be the projection to the first two factors, and let $D_r(X)\subset X^{r+1}\times X$ be the closed subscheme given by \[D_r(X):= \bigg\{\big((x_0,\cdots, x_r),x\big): x=x_i \text{ for some } 0\leq i\leq r\bigg\}\] Then define \begin{gather}\label{edf:Er}
\E_r := (\mathrm{pr}_{13})_{*}(\mathrm{pr}^*_{12}\mathcal{I}_{{D_0}(X)}\otimes \mathrm{pr}^*_{23}\Pc(\d)).
\end{gather} where  $\mathcal{I}_{{D_r}(X)}$ is the ideal sheaf of $D_r(X)\subset X^{r+1}\times X$. Then, following through the proof of Lemma \ref{lemma:X0} and invoking the sequences \eqref{seq:I_Z} and \eqref{leq:I_ZL}, we see that $\E_r$ is locally free on each of the locally closed strata provided by the intersection of the diagonals, its stalk at a point $$\big((x_0,\cdots, x_r), L\big)\in X^{r+1}\times\pic_{\d}(X)$$ (note that $x_i$s need not be distinct) is $$\Gamma(X,L\otimes \I_{\mathrm{Supp(\sum_i x_i)}})$$ where $\mathrm{Supp}(\sum_i x_i)$ is the (set-theoretic) support of the $0$-cycle $\sum_i x_i$ corresponding the $(r+1)$-tuple $(x_0,\cdots, x_r)$ of points in $X^{r+1}$; in turn, if $X^{r+1}_{(m)}\subset X^{r+1}$ is defined as the locus of $(r+1)$-tuples of points of which exactly $m$ are distinct, then we have the following: \begin{lemma}\label{lemma:Xr}
For all $0\leq r\leq r(\d)$, we have \begin{gather}
\X_r=\underline{\mathrm{Proj}}_{X^{r+1}\times \pic_{\d}(X)}\Sym(\E_r^{\oplus(N+1)})
\end{gather}where $$\E_r\to X^{r+1}\times \pic_{\d}(X)$$ is a stratified vector bundle: for $1\leq m\leq r+1$, on each locally closed strata $X_{(m)}^{r+1}\times \pic_{\d}(X)$, it is a vector bundle of rank $(N_d-m)(N+1)$.
\end{lemma}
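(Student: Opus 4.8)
The plan is to prove the lemma in two stages: first establish that $\E_r$ is a stratified vector bundle of the stated ranks, and then identify $\X_r$ with the relative $\underline{\mathrm{Proj}}$. The core is the first stage, for which I would apply cohomology and base change (\cite[Theorem 12.11]{Harshorne77}, \cite[EGA III 7.7]{Grothendieck63}) to the projective morphism $\mathrm{pr}_{13}\colon X^{r+1}\times X\times\pic_{\d}(X)\to X^{r+1}\times\pic_{\d}(X)$ and the coherent sheaf $\F:=\mathrm{pr}_{12}^*\I_{D_r(X)}\otimes\mathrm{pr}_{23}^*\Pc(\d)$ of \eqref{edf:Er}, exactly in the style of Lemma \ref{lemma:X0}. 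First I would fix a stratum and restrict to $X_{(m)}^{r+1}\times\pic_{\d}(X)$. The essential geometric input is that the restriction of the incidence $D_r(X)$ to this stratum is finite and flat of constant relative length $m$: over every point of $X_{(m)}^{r+1}$ exactly $m$ of the coordinates $x_0,\ldots,x_r$ are distinct, so $D_r(X)$ restricts to a disjoint union of graphs of the distinct coordinate-sections and is therefore flat of length $m$ there --- this flatness is exactly what fails over all of $X^{r+1}$ and is the reason a stratification is unavoidable.

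With flatness in hand, the stalk of $R^i\mathrm{pr}_{13,*}\F$ at $((x_0,\ldots,x_r),L)$ is controlled by $H^i(X,L\otimes\I_Z)$ for $Z:=\mathrm{Supp}(\sum_i x_i)$ with $|Z|=m$. Repeating the argument of Lemma \ref{lemma:X0} verbatim --- the sequence \eqref{seq:I_Z} twisted by $L$, the long exact sequence \eqref{leq:I_ZL}, acyclicity of $\d$, and $H^{>0}(X,L\otimes\O_Z)=0$ for the skyscraper $\O_Z$ --- yields $H^i(X,L\otimes\I_Z)=0$ for all $i>0$ as soon as $H^0(X,L)\twoheadrightarrow H^0(X,L\otimes\O_Z)$ is surjective, which is precisely the statement that $\d$ separates the $m$ points of $Z$. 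Hence, on the stratum, the higher direct images vanish, base change applies, and $\mathrm{pr}_{13,*}\F=\E_r$ is locally free with fibre $H^0(X,L\otimes\I_Z)$; the surjectivity of the evaluation map then computes its rank as $\dim H^0(X,L)-\length(\O_Z)=N_d-m$, so $\E_r^{\oplus(N+1)}$ has rank $(N_d-m)(N+1)$ there, as stated. For the second stage I would identify $\X_r$ with the relative $\underline{\mathrm{Proj}}$ by comparing functors of points: over an $(X^{r+1}\times\pic_{\d}(X))$-scheme $T$, both $\X_r$ (by its defining incidence \eqref{def:Xr}) and $\underline{\mathrm{Proj}}_{X^{r+1}\times\pic_{\d}(X)}\Sym(\E_r^{\oplus(N+1)})$ classify an $(N+1)$-tuple of sections of the pulled-back Poincaré bundle that vanish along the tautological points, taken up to simultaneous scaling --- i.e. a line in $\E_{r,T}^{\oplus(N+1)}$ --- so the tautological such tuple on $\X_r$ defines a morphism to the relative $\underline{\mathrm{Proj}}$ that is an isomorphism.

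I expect the main obstacle to be the careful bookkeeping of the stratification. One must verify that $D_r(X)$ restricted to $X_{(m)}^{r+1}\times\pic_{\d}(X)$ is genuinely flat of constant length $m$ --- not merely set-theoretically of the right cardinality --- so that base change may be invoked stratum by stratum to pin down both the vanishing of the higher $R^i\mathrm{pr}_{13,*}\F$ and the exact fibre dimension; the reduced scheme structure on $Z=\mathrm{Supp}(\sum_i x_i)$ (forced by working on the locally closed stratum) is what makes $\length(\O_Z)=m$ and the separation hypothesis directly applicable. The borderline stratum $m=r+1$ at $r=r(\d)$ is where the point-separation of $\d$ is used to its full strength, and is precisely why the range is cut off at $r(\d)$; everything else in the argument is a stratum-by-stratum repetition of Lemma \ref{lemma:X0}.
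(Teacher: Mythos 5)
Your proposal is correct and follows essentially the same route as the paper, which simply asserts that the proof of Lemma \ref{lemma:X0} ``translates almost verbatim'' using the sequences \eqref{seq:I_Z} and \eqref{leq:I_ZL}, stratum by stratum over $X^{r+1}_{(m)}\times\pic_{\d}(X)$, with acyclicity killing $H^{i}$ for $i\geq 2$ and separation of $m$ points giving the surjectivity that kills $H^1(X,L\otimes\I_Z)$ and computes the rank $N_d-m$. If anything you are more explicit than the paper about where the point-separation hypothesis (and hence the cutoff at $r(\d)$) actually enters, and about the flatness of $D_r(X)$ over each stratum needed to invoke base change.
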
We let $\mathring{X}^{r+1}$ denote $X_{(r+1)}^{r+1} = {X}^{r+1}-\text{ diagonals}$.
\begin{remark}
	The notion of a stratified vector bundle has several approaches: all minor variants of each other, depending on what one's end goal is. A modern reference which speaks of its development and recent applications is by Ross (\cite{Ross2024}). In our case, the meaning has been made clear by specifying the strata explicitly, which does not require any outside knowledge on the theory of stratified vector bundles. The interested reader can refer to \cite{Ross2024} and the references therein.
\end{remark}
	
	\subsection{Geometry and topology of $\mor_{\d}(X,Y)$}\label{subsection:morXY}
	Recall, from the introduction, that we fixed $Y$ to be a polarized smooth projective variety, $\Upsilon$  the polarization on $Y$ i.e. we fix an embedding $\upsilon: Y\hookrightarrow \P(\Gamma(Y,\Upsilon)^{*})\cong \P^N$, where $\Upsilon=\upsilon^* \opn(1)$ and $N=\dim |\Upsilon|$, the rank of the complete linear system $|\Upsilon|$. Furthermore, let $\g=(g_1,\cdots, g_m)$ denote the set of generators of the homogenous ideal of $Y$ in $\Sym(\Gamma(Y,\Upsilon))$.
	
\subsubsection{Compactification of $\mor_{\d}(X,Y)$} Now we construct a natural compactification of $\mor_{\d}(X,Y)$, and a hypercover augmented over its `discriminant locus'. Let $\d\in N^1(X)$ be as in Lemma \ref{lemma:r-separating}.  The space $\mor_{\d}(X,Y)$ is given by: 
\begin{align}
	\mor_{\d}(X,Y)=\bigg\{(L,[s_0:\ldots : s_N]): L\in \pic_{\d}(X), s_i\in \Gamma(X,L), 0\leq i\leq N, \nonumber\\ \bigcap_{0\leq i\leq N}\mathrm{div}(s_i)=\emptyset, \,\, \g(s_0,\cdots, s_N)=0\bigg\}
\end{align}\noindent Thus, $\mor_{\d}(X,Y)$ is naturally a closed (not necessarily nonempty) subscheme of $\mor_{\d}(X,\P^N)$. 
As before, we define the `discriminant locus' as: \begin{align}\label{def:discriminantY}
	\Z_{\d}(X,Y):= \bigg\{(L,[s_0:\ldots: s_N]): L\in \pic_{\d}(X), s_i\in \Gamma(X,L), 0\leq i\leq N, \nonumber\\ \bigcap_{0\leq i\leq N}\mathrm{div}(s_i)\neq\emptyset,  \,\, \g(s_0,\cdots, s_N)=0\bigg\}.
\end{align} 

\subsubsection{Hypercover over $\Z_{\d}(X,Y)$} For each $r\geq 0$, we define the following spaces:
\begin{align}\label{def:XrY}
	\X_r(Y):= \bigg\{\big((L,[s_0:\ldots: s_N]), (x_0,\ldots, x_r)\big): L\in \pic_{\d}(X), s_i\in \Gamma(X,L), 0\leq i\leq N, \nonumber\\ x_j\in \bigcap_{0\leq i\leq N}\mathrm{div}(s_i), \text{ for all }0\leq j\leq r, \,\, \g(s_0,\cdots, s_N)=0\bigg\}\nonumber\\\subset X^{r+1}\times \Z_{\d}(X,Y)
\end{align}At this point, we refer back to paragraph \ref{para:Poincare}. The natural map in \ref{map:openimmersion}: $$\mor_{\d}(X,\P^N)\hookrightarrow  \P(\nu_*\Pc(\d)^{\oplus(N+1)})$$ was shown to be an open immersion. We denote, by $\X_{-1}(Y)$, the following closed subscheme of $\P(\nu_*\Pc(\d)^{\oplus(N+1)})$---\begin{gather}
\X_{-1}(Y)= \bigg\{(L,[s_0:\ldots: s_N]): L\in \pic_{\d}(X), s_i\in \Gamma(X,L), 0\leq i\leq N, \nonumber\\ \g(s_0,\cdots, s_N)=0\bigg\}
\end{gather}Observe that when $Y=\P^N$ we get back our old definitions of $\X_r$, $r\geq -1$. Observe that we have a natural open immersion \begin{gather}\label{map:jfrak}
\jfrak: \mor_{\d}(X,Y)\hookrightarrow\X_{-1}(Y)
\end{gather} and its closed complement is $\Z_{\d}(X,Y)$; we let \begin{gather}\label{map:ifrak}
\ifrak: \Z_{\d}(X,Y)\hookrightarrow\X_{-1}(Y)
\end{gather}denote the closed inclusion.

 For an arbitrary $Y$, the space $\X_0(Y)$ as defined above is hardly ever smooth, so the statement of Lemma \ref{lemma:X0} no longer holds. However, a part of the proof of Lemma \ref{lemma:X0} clearly translates to the case of an arbitrary $Y$ to show that $\X_0(Y)$ is a projective variety. Indeed, in the proof of Lemma  \ref{lemma:X0} we saw that $\X_0$ is the projectivization of the vector bundle
$\nu_*(\E_0)$ over $X\times\pic_{\d}$. By definition of $\X_0(Y)$ we have the following:
\[
\begin{tikzcd}
	\X_0(Y) \arrow[rr, hook] \arrow[dr] && \X_0 \arrow[dl] \\
	& X\times \pic_{\d}(X)
\end{tikzcd}
\] where the horizontal arrow is a closed embedding, the fibres of $\X_0(Y)$ over a point $(x,L)\in X\times \pic_{\d}(X)$ is given by the locus of $[s_0:\cdots: s_N]$ such that $$\g(s_0:\cdots:s_N)=0,$$ where $s_0,\ldots, s_N\in \Gamma(X,L\otimes \I_x)$. By a similar reasoning $\X_{-1}(Y)$ is also a projective variety: \[
\begin{tikzcd}
	\X_{-1}(Y) \arrow[rr, hook] \arrow[dr] && \X_{-1}\arrow[dl] \\
	& \mathrm{Pic}_{\d}(X)
\end{tikzcd}
\] where the horizontal arrow is a closed embedding, the fibres of $\X_{-1}(Y)$ over a point $L\in  \pic_{\d}(X)$ is given by the locus of $[s_0:\cdots: s_N]$ such that $\g(s_0:\cdots:s_N)=0,$ where $s_0,\ldots, s_N\in \Gamma(X,L)$.  The natural map \begin{gather}
\X_0(Y)\twoheadrightarrow \Z_{\d}(X,Y)\subset \X_{-1}(Y)\nonumber\\(L,[s_0:\cdots: s_N]), x\mapsto (L,[s_0:\cdots: s_N])
\end{gather}is projective, and hence proper. Also, for all $r\geq 0$, we have, by its very definition, 
\begin{gather}\label{def:XrFibreProductY}
	\X_r(Y)= \underbrace{\X_0(Y)\times_{\X_{-1}(Y)} \X_0(Y) \times_{\X_{-1}(Y)}\times \cdots\times_{\X_{-1}(Y)}\X_0(Y)}_{r+1}, 
\end{gather} and there are natural maps, as one would expect,\begin{gather}\label{map:pi_rY}
	\pi_r: \X_r(Y)\to \Z_{\d}(X,Y) 
\end{gather}given by composing the face maps, the $i^{th}$ face map given by forgetting the $i^{th}$ factor: $$	\X_r(Y)\to 	\X_{r-1}(Y).$$
Recalling the standard fact that proper maps are stable under base change \cite[Corollary 4.8]{Harshorne77}, paired with our earlier observation that $\X_0(Y)\to \X_{-1}(Y)$ is projective, and hence proper, we obtain a generalization of Lemma \ref{lemma:hypercover}:

\begin{prop}\label{lemma:hypercoverY}
	The simplicial scheme $\pi_{\bullet}:\X_{\bullet}(Y) \to \Z_{\d}(X,Y)$ is a proper hypercover augmented over $\Z_{\d}(X,Y)$, with the $i^{th}$ face maps $\X_r(Y)\to \X_{r-1}(Y)$ given by forgetting the $i^{th}$-factor from the expression \eqref{def:XrFibreProductY}, and the degeneracy maps given by the diagonal embeddings.
\end{prop}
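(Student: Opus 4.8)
The plan is to reproduce the proof of Lemma~\ref{lemma:hypercover} essentially verbatim. The key observation is that the only genuine inputs to that argument were (i) the properness and surjectivity of the degree-zero augmentation and (ii) the fact that $\X_{\bullet}(Y)$ has the shape of a \v{C}ech nerve; neither of these relies on the smoothness of $\X_0$, which is the single feature lost in passing from $Y=\P^N$ to a general $Y$ (smoothness entered Lemma~\ref{lemma:X0} only to furnish the resolution of singularities of Lemma~\ref{lemma:ResolutionOfSingularities}, which is never used for the hypercover property).

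First I would record that $\pi_0\colon\X_0(Y)\to\Z_{\d}(X,Y)$ is proper and surjective. Properness has already been established above: the map $\X_0(Y)\to\X_{-1}(Y)$ factors through the closed immersion $\ifrak\colon\Z_{\d}(X,Y)\hookrightarrow\X_{-1}(Y)$ as a projective, hence proper, morphism onto $\Z_{\d}(X,Y)$. Surjectivity is immediate from the definitions: a point $(L,[s_0:\cdots:s_N])\in\Z_{\d}(X,Y)$ satisfies $\g(s_0,\cdots,s_N)=0$ and $\bigcap_i\mathrm{div}(s_i)\neq\emptyset$, so any choice of $x\in\bigcap_i\mathrm{div}(s_i)$ produces a preimage $\big((L,[s_0:\cdots:s_N]),x\big)\in\X_0(Y)$.

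Next I would identify $\X_{\bullet}(Y)$ with the \v{C}ech nerve $\mathrm{cosk}_0\big(\X_0(Y)/\Z_{\d}(X,Y)\big)$. Since $\X_0(Y)\to\X_{-1}(Y)$ factors through the monomorphism $\ifrak$, the fibre products over $\X_{-1}(Y)$ in \eqref{def:XrFibreProductY} agree with those taken over $\Z_{\d}(X,Y)$, so $\X_r(Y)=\X_0(Y)^{\times_{\Z_{\d}(X,Y)}(r+1)}$, with the $i^{th}$ face map the projection forgetting the $i^{th}$ factor and the degeneracies the diagonal insertions---exactly the simplicial operators of a \v{C}ech nerve, whence the simplicial identities hold automatically.

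Finally I would invoke the standard fact that the \v{C}ech nerve of a proper surjective morphism is a proper hypercover (a basic instance underlying proper cohomological descent; cf.\ the six-functor framework of \cite{LZ24}). Concretely, since $\mathrm{cosk}_0$ is $n$-coskeletal for every $n\geq 0$, the matching maps $\X_{n+1}(Y)\to(\mathrm{cosk}_n\,\mathrm{sk}_n\,\X_{\bullet}(Y))_{n+1}$ are isomorphisms for $n\geq 1$, so the only covering condition left to check is the one on $\X_0(Y)\to\Z_{\d}(X,Y)$, verified above. That every $\X_r(Y)$ and every face map is proper then follows from stability of properness under base change and composition (\cite[Corollary 4.8]{Harshorne77}), each $\X_r(Y)$ being an iterated fibre product of the proper $\X_0(Y)$ over $\Z_{\d}(X,Y)$. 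The expected ``obstacle'' is thus essentially cosmetic: the loss of smoothness of $\X_0(Y)$ affects none of these steps, so the generalization of Lemma~\ref{lemma:hypercover} is genuinely formal.
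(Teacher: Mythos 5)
Your proposal is correct and follows essentially the same route as the paper: properness of $\X_0(Y)\to\X_{-1}(Y)$ (hence onto $\Z_{\d}(X,Y)$), the iterated fibre product description \eqref{def:XrFibreProductY}, and stability of properness under base change. You are in fact slightly more complete than the paper's own terse argument, since you explicitly record surjectivity of the augmentation and the coskeletal reduction of the hypercover condition to level zero, and you correctly note that the smoothness of $\X_0$ lost in passing to general $Y$ plays no role here.
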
 

\noindent\myparagraph\label{para:keyassumption} \textbf{Leray-Hirsch type.} As noted earlier, for an arbitrary $Y$ there is no guarantee the schemes $\mor_{\d}(X,Y)$ or $\X_r(Y)$ are nonempty, and even if they are, there is no general recipe to understand their geometry. Note that for $Y=\P^N$, Lemma \ref{lemma:Xr} not only implies $\X_r$ is a stratified fibre bundle in a range of values of $r$, but being projectivization of vector bundles on those strata, it satisfies the Leray-Hirsch theorem.

Throughout this paper we work on basis of the assumption that  $\mor_{\d}(X,Y)$ is  nonempty, and that $\X_r(Y)$, for all $r\leq r(\d)$, is nonempty, and satisfies $\Q$-Leray-Hirsch (i.e. it satisfies the Leray-Hirsch theorem with coefficients in $\Q$) on each locally closed strata $X_{(m)}^{r+1}\times\pic_{\d}(X)\subset X^{r+1}\times\pic_{\d}(X)$ for $1\leq m\leq r(\d)$. Let $Y(D_r)$ denote the fibre of $\X_r(Y)$ over $\mathring{X}^{r+1}\times\pic_{\d}(X)$ (where, recall that $\mathring{X}^{r+1}= X^{r+1}-\text{ diagonals}$).

\begin{remark}
	There are no sufficient conditions one can impose on a reasonably large class of $Y$s to result in $\X_r(Y)$  carrying the structure of a stratified fibre bundle a la Lemma \ref{lemma:Xr}, let alone satisfy Leray-Hirsch. An analogue of Lemma \ref{lemma:Xr} is not hard to prove when $Y$ carries a nice universal torsor description (see the proof of \cite[Theorem 1]{Banerjee2022}, and also works on Manin's conjecture in \cite{BT98}, \cite{Pieropan16} and the references therein). However, for a general $Y$, a proof of such a fact, when true, is largely elusive, and is deeply rooted in the theory of algebraic cycles in $Y$--- for a detailed discussion on algebraic fibre bundles being of \emph{Leray-Hirsch} type, and its relation to Hodge conjecture, see Meng's work in \cite{Meng2021}.
\end{remark}

\section{Symmetric (co)simplicial sheaves}
By Lemma \ref{lemma:hypercover}, the hypercover defined in \eqref{def:XrFibreProduct} admits cohomological descent--- which is really the main ingredient for proving Theorem \ref{theorem}.  However, we study the hypercover \eqref{def:XrFibreProduct} as a $\Delta S$-scheme, as opposed to a simplicial scheme, where $\Delta S$ denotes \emph{symmetric simplicial category}. The advantage of working over $\Delta S$, instead of the standard simplicial category $\Delta$, is that the second statement of Theorem \ref{theorem} almost comes for free; however, working over $\Delta S$ results in considering (co)invariants under group actions in the derived category of constructible sheaves, an operation that do not behave well in triangulated categories. 

In this section, we collect the necessary facts about $\Delta S$, and then overcome the obstacle of working in the derived category of constructible sheaves by rephrasing and using cohomological descent in the $\infty$-category of sheaves as developed in \cite{GL19}.

\subsection{$\Delta S$-hypercovers}	The category $\Delta S$ has appeared in various forms throughout the literature, often studied independently with different motivations. Notable examples include Feigin and Tsygan's work on additive K-theory (\cite{FT87}), Pirashvili and Richter's reinterpretation of $\Delta S$ as the category of \emph{non-commutative sets} (\cite{PR02}), and investigations into \emph{symmetric homology} by Fiedorowicz and Loday (\cite{FL91}), as well as Krasauskas's independent development of related ideas (\cite{Krasauskas87}).

In this note, we draw upon the framework for the \emph{symmetric simplicial category} $\Delta S$ established in the context of hypercovers in \cite[\S 2]{Ban24}, which, in turn, is primarily based on the notations set up in \cite{FL91}, quoting only the results needed here. For further details, the interested reader can consult the relevant sections of \emph{loc. cit}.

\noindent\myparagraph	Observe that by \cite[Definition 2.1]{Ban24}, the simplicial scheme $\X_{\bullet}$ is a $\Delta S$-object in the category of schemes. Indeed, the symmetric group $S_{r+1}$ acts freely on $\X_r$ by permuting the factors of the fibre product in \eqref{def:XrFibreProduct}, and one immediately sees that the action maps compose with the face and degeneracy maps of the simplicial scheme $\X_r$ as per the requirements of  \cite[Definition 2.1]{Ban24}. Paired with Lemma \ref{lemma:hypercover} we obtain the following:
\begin{lemma}\label{lemma:DeltaS-scheme}
	The proper hypercover $\pi_{\bullet}:\X_{\bullet} \to \Z_{\d}(X,\P^N)$ is a  $\Delta S$-scheme augmented over $\Z_{\d}(X,\P^N)$, with the $i^{th}$ face map $\X_r\to \X_{r-1}$ given by forgetting the $i^{th}$-factor from the expression \eqref{def:XrFibreProduct}, and the degeneracy maps $\X_{r-1}\to \X_r$ given by the diagonal embeddings.
\end{lemma}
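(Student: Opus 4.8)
The plan is to promote the simplicial structure already recorded in Lemma~\ref{lemma:hypercover} to a $\Delta S$-structure by exhibiting the symmetric group actions and checking the mixed relations that present $\Delta S$ in the sense of \cite[Definition 2.1]{Ban24}. The conceptual point is that $\X_{\bullet}$ is nothing but the \v{C}ech nerve of the proper surjection $\X_0\twoheadrightarrow\Z_{\d}(X,\P^N)$: by \eqref{def:XrFibreProduct} each $\X_r$ is the $(r+1)$-fold fibre power of $\X_0$ over $\X_{-1}$, equivalently over $\Z_{\d}(X,\P^N)$ since the structure map of $\X_0$ factors through the closed image $\Z_{\d}(X,\P^N)\subset\X_{-1}$. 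Now a fibre power of a fixed scheme over a fixed base is contravariantly functorial not only in the monotone maps of the index set $\{0,\dots,r\}$ but in \emph{all} maps of finite sets: a map $f\colon\{0,\dots,q\}\to\{0,\dots,r\}$ induces $\X_r\to\X_q$, $\big((L,[s_0:\cdots:s_N]),(x_0,\dots,x_r)\big)\mapsto\big((L,[s_0:\cdots:s_N]),(x_{f(0)},\dots,x_{f(q)})\big)$, by the universal property of the fibre product. Restricting this functoriality to bijections yields the $S_{r+1}$-actions, to monotone injections the face maps, and to monotone surjections the degeneracies; this single functorial assignment is exactly what it means to be a $\Delta S$-object.

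Concretely I would carry this out in three steps. First, define the action of $\sigma\in S_{r+1}$ on $\X_r$ by $\big((L,[s_0:\cdots:s_N]),(x_0,\dots,x_r)\big)\mapsto\big((L,[s_0:\cdots:s_N]),(x_{\sigma^{-1}(0)},\dots,x_{\sigma^{-1}(r)})\big)$, and observe that this is an automorphism of $\X_r$ over $\X_{-1}$, hence over $\Z_{\d}(X,\P^N)$ via $\pi_r$, because it merely permutes the identical factors of the $(r+1)$-fold fibre power and is therefore induced by the universal property. Second, recall that the $i$-th face $d_i\colon\X_r\to\X_{r-1}$ deletes $x_i$ and the $i$-th degeneracy $s_i\colon\X_{r-1}\to\X_r$ doubles $x_i$. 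Third, verify the mixed relations: on points one has $d_i\circ\sigma=\sigma'\circ d_{\sigma^{-1}(i)}$, where $\sigma'\in S_r$ is the permutation induced by $\sigma$ after deleting the $\sigma^{-1}(i)$-th slot, together with the dual identities expressing each composite $\sigma\circ s_i$ as a degeneracy precomposed with an element of $S_r$. All of these follow immediately from the commutation of deleting or doubling a coordinate with permuting coordinates, and they are precisely the generating relations of $\Delta S$ demanded by \cite[Definition 2.1]{Ban24}.

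Finally I would note that the augmentation $\pi_r$ of \eqref{map:pi_r} records only the datum $(L,[s_0:\cdots:s_N])$, which every formula above leaves fixed; thus each $\sigma\in S_{r+1}$ commutes with $\pi_r$ for the trivial action on the base, and $\X_{\bullet}\to\Z_{\d}(X,\P^N)$ is an augmented $\Delta S$-scheme. I expect no essential difficulty here: the content is entirely formal, inherited from the symmetry of the fibre-power construction, and the only genuine care required is bookkeeping of conventions---left versus right actions, and the placement of $\sigma$ versus $\sigma^{-1}$ in the face and degeneracy relations---so that the geometrically transparent commutations match verbatim the generators-and-relations presentation of $\Delta S$. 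It is worth flagging for the later invariants computation that the $S_{r+1}$-action is free only over the open locus $\mathring{X}^{r+1}\times\pic_{\d}(X)$, since a transposition fixes any point with $x_i=x_j$; this subtlety is irrelevant to the $\Delta S$-structure itself but will matter when passing to (co)invariants.
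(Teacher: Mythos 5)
Your proof is correct and follows essentially the same route as the paper, which simply observes that $S_{r+1}$ acts on $\X_r$ by permuting the factors of the fibre power \eqref{def:XrFibreProduct} and that these permutations compose with the face and degeneracy maps as required by \cite[Definition 2.1]{Ban24}; your \v{C}ech-nerve, functoriality-in-all-maps-of-finite-sets packaging is just a cleaner way of organizing the same verification. One point in your favour: the paper asserts that the $S_{r+1}$-action on $\X_r$ is \emph{free}, whereas, as you correctly flag, it is free only over $\mathring{X}^{r+1}\times\pic_{\d}(X)$ and has stabilizers along the diagonals---this does not affect the $\Delta S$-structure, which is all the lemma claims, but your caveat about the later passage to (co)invariants is well taken.
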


\subsubsection{Sheaves on schemes} 

For a  scheme $B$, let $\Shv(B)$ be the derived $\infty$-category of sheaves with coefficients in $\Q$-vector spaces (where $\Q$ is a field of characteristic $0$ that contains the rational numbers), equipped with Grothendieck's six functor formalism. It is a stable $\infty$-category with a $t$-structure, whose heart is the abelian category of constructible sheaves of $\Q$-vector spaces on $B$. 

For details on the construction of the sheaf theory we use in this section, the reader is directed to Gaitsgory-Lurie's work (\cite[Sections 2 and 3]{GL19}). For homological algebra on $\Shv(B)$ the reader may refer to \cite[\S 1.2, \S 1.3]{Lurie17}. For the theory of proper descent in this setup, see Liu-Zheng's \cite{LZ24}. 



\subsubsection{$\Delta S$-sheaves}\label{subsubsec:DeltaS} A $\Delta S$-sheaf on a space $B$ is a functor\[\F:\Delta S\to \Shv(B).\] As is customary, we denote such a sheaf by $\F_{\bullet}$. Note that a $\Delta S$-sheaf is naturally a $\Delta$-sheaf i.e. a cosimplicial object in $\Shv(B)$; henceforth, unless otherwise stated, notations like $\F_{\bullet}$ will always mean a $\Delta S$-sheaf.

For $\Delta S$-sheaves $\F_{\bullet}$ and  $\mathcal{G}_{\bullet}$, let $\mathrm{Hom}^{\Delta S}_{\Shv(B)}(\F_{\bullet}, \mathcal{G}_{\bullet})$ (respectively, $\mathrm{Hom}^{\Delta}_{\Shv(B)}(\F_{\bullet}, \mathcal{G}_{\bullet})$) denote the $\Delta S$-sheaf (respectively, $\Delta$-sheaf) given by maps $\F_n\to \mathcal{G}_n$ in $\Shv(B)$ that commute with the face and degeneracy maps in $\Delta S$ (respectively, $\Delta$).

Since  a $\Delta S$-sheaf $\F_{\bullet}$ is a naturally a $\Delta$-sheaf, we can consider the corresponding Moore cochain complex, which we denote by \begin{gather}
	C^*\bigg( (\F_n, d)\bigg)
\end{gather}where the differentials are, by definition, given by the alternating sum of face maps: $d=\sum_i(-1)^i d_i$ (see \cite[remarks 1.2.4.3 and 1.2.4.4]{Lurie17}). Moreover, for a $\Delta S$-sheaf $\F_{\bullet}$, we get a \emph{twisted-by-sign sheaf} for each $n$: \[\big(\F_n\otimes \sgn_{S_{n+1}}\big)^{S_{n+1}}\]where the natural action of $S_{n+1}$ on $\F_n$ is twisted by a sign
(see a similar construction in the discussion preceding \cite[Theorem 6.9]{FL91}). This supplies us with a cochain complex \begin{gather}
C^*\Big(\big(\F_n\otimes \sgn_{S_{n+1}}\big)^{S_{n+1}},d\Big) 
\end{gather}where the differential $d$ is given by the alternating sum of face maps: $d=\sum_i(-1)^i d_i$. The next  lemma focuses on relating these two.



\begin{lemma}\label{lemma:isomorphismDeltaS}
	Let $\F_{\bullet}$ be a $\Delta S$-sheaf on a scheme $B$. Then the following surjection is an isomorphism:
\begin{gather}
	C^*\bigg( (\F_n, d)\bigg)\xrightarrow{\cong} C^*\Big(\big(\F_n\otimes \sgn\big)^{S_{n+1}},d\Big)
\end{gather}

\end{lemma}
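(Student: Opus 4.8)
The plan is to take the comparison map to be the levelwise antisymmetrization operator $e_-=\frac{1}{(n+1)!}\sum_{g\in S_{n+1}}\sgn(g)\,g$ acting on $\F_n$. Since we work with $\Q$-coefficients in characteristic $0$, the element $e_-$ is an idempotent in $\Q[S_{n+1}]$ projecting $\F_n$ onto its sign-isotypic summand, which is exactly $(\F_n\otimes\sgn)^{S_{n+1}}$; thus the levelwise map $\F_n\twoheadrightarrow(\F_n\otimes\sgn)^{S_{n+1}}$ is a split surjection, establishing the surjectivity asserted in the statement. The two genuine points to prove are then: (i) that $e_-$ is compatible with the differentials, so that it defines a morphism of cochain complexes, and (ii) that this morphism is a quasi-isomorphism (the ``isomorphism'' being understood in $\Shv(B)$, i.e. after totalization in the stable $\infty$-category).

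For (i), the first step is the key computation. Writing $d=\sum_i(-1)^i d^i$ for the alternating sum of cofaces $d^i\colon\F_{n-1}\to\F_n$, I would use the defining compatibility of a $\Delta S$-object between the $S$-action and the coface maps, namely $g\cdot d^i=d^{g(i)}\cdot g^{(i)}$, where $g^{(i)}\in S_n$ is the permutation induced by $g\in S_{n+1}$ after deleting $i$ from the source and $g(i)$ from the target. Substituting this into $e_-\circ d$, together with the elementary sign identity $\sgn(g)=(-1)^{i+g(i)}\sgn(g^{(i)})$, the double sum over $(g,i)$ reindexes --- for fixed target index $j=g(i)$ and fixed $h=g^{(i)}\in S_n$ there are exactly $n+1$ pairs $(g,i)$ --- and collapses to $d\circ e_-$. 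Hence $e_-$ is a chain map; being a levelwise idempotent chain map it splits the complex as $C^*((\F_n,d))\simeq B^\bullet\oplus K^\bullet$, where $B^\bullet=C^*((\F_n\otimes\sgn)^{S_{n+1}},d)$ is the image and $K^\bullet=\ker(e_-)$ is the complementary (non-sign-isotypic) subcomplex. Statement (ii) is thus equivalent to the acyclicity of $K^\bullet$.

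To attack acyclicity I would bring in conormalization. By the Dold--Kan correspondence in this setting (\cite[\S 1.2]{Lurie17}) the unnormalized complex $C^*((\F_n,d))$ is quasi-isomorphic to the conormalized complex $N^\bullet$, with $N^n=\bigcap_i\ker(s^i)$ and the complementary degenerate subcomplex acyclic. The codegeneracy relation $s^i\circ\tau_i=s^i$ (where $\tau_i=(i,i+1)$ is the transposition swapping the two coordinates merged by the diagonal degeneracy) gives $s^i(v)=s^i\tau_i(v)=-s^i(v)$ for any sign-isotypic $v$, so $B^\bullet\subseteq N^\bullet$; moreover $N^\bullet$ is $S$-stable levelwise, so $e_-$ restricts to it and $B^\bullet$ is precisely its sign-isotypic part. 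Consequently the claim reduces to showing that the non-sign-isotypic part $(1-e_-)N^\bullet$ of the conormalized complex is acyclic.

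This last reduction is where I expect the real difficulty to lie. The cleanest route is to exploit naturality: both $\F_\bullet\mapsto C^*(\F_\bullet)$ and $\F_\bullet\mapsto B^\bullet$ are exact functors of the $\Delta S$-sheaf and $e_-$ is a natural transformation, so it suffices to verify acyclicity of $K^\bullet$ on the free (representable) $\Delta S$-sheaves $\Q[\Delta S(-,[m])]\otimes M$ that generate the category, where the alternating-face complex can be written out explicitly. For those generators the statement becomes the purely combinatorial fact that the cohomology of the alternating-face complex of a $\Delta S$-object is concentrated in its sign-isotypic part --- exactly the computation underlying symmetric homology in Fiedorowicz--Loday (the discussion around \cite[Theorem 6.9]{FL91}) and its adaptation to hypercovers in \cite[\S 2]{Ban24}, which I would quote and transport into $\Shv(B)$. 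The main obstacle, then, is not the formal splitting but verifying this acyclicity uniformly in the $\infty$-categorical sheaf setting, i.e. checking that the contraction furnished by the $\Delta S$-combinatorics is functorial enough to be applied levelwise to constructible-sheaf coefficients rather than to ordinary modules.
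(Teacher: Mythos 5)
Your first half is correct and is a genuinely different, more explicit route than the paper's. The verification that the antisymmetrizer $e_-$ commutes with $d=\sum_i(-1)^i d^i$ via the crossed-simplicial relation $g\cdot d^i=d^{g(i)}\cdot g^{(i)}$ and the sign identity $\sgn(g)=(-1)^{i+g(i)}\sgn(g^{(i)})$ is a sound computation (the reindexing over pairs $(g,i)$ with $g(i)=j$, $g^{(i)}=h$ does produce the factor $n+1$ that converts $\tfrac{1}{(n+1)!}$ into $\tfrac{1}{n!}$), and in characteristic $0$ this does split $C^*\big((\F_n,d)\big)$ as the sign-isotypic subcomplex plus a complement $K^\bullet$. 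The paper never exhibits this chain-level splitting: it instead runs the Fiedorowicz--Loday bicomplex $\Q[(S_p)^q]\otimes\F_q$, whose horizontal filtration degenerates (again by characteristic $0$) to the sign-invariants complex in the row $q=0$, and then quotes the comparison of symmetric cohomology with ordinary cosimplicial cohomology (\cite{FL91}, Theorem 6.16) to conclude. Up to this point your approach buys something the paper's does not, namely an explicit idempotent realizing the quasi-isomorphism.

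The gap is in the acyclicity of $K^\bullet$, which is where the entire content of the lemma sits, and your argument for it does not go through as written. First, the reduction ``both functors are exact, so it suffices to verify acyclicity on the representables'' is a non sequitur: exactness lets you pass along short exact sequences, but to descend from representables to an arbitrary $\Delta S$-sheaf you need a functorial resolution by sums of representables together with a convergence argument for the resulting bicomplex of objects of $\Shv(B)$ --- and once you write that resolution down it is exactly the bar-type bicomplex $\Q[(S_p)^q]\otimes\F_q$ of \cite{FL91}, handled in the paper via the filtered-object formalism of \cite[Lemma 1.2.2.4]{Lurie17}; so the ``elementary'' route collapses back into the argument you were trying to avoid. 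Second, the conormalization step relies on the relation $s^i\circ\tau_{i,i+1}=s^i$. That relation holds for the plain category of finite sets, but not in $\Delta S$ in the Fiedorowicz--Loday/non-commutative-sets sense used here: morphisms carry orderings on their fibers, every morphism factors uniquely as $\phi\circ g$ with $\phi\in\Delta$ and $g$ a permutation, and hence $\sigma^i\circ\tau_{i,i+1}$ and $\sigma^i$ are distinct morphisms. (They happen to be identified by the particular $\Delta S$-scheme $\X_{\bullet}$, whose degeneracies are diagonal embeddings, but the lemma is asserted for arbitrary $\Delta S$-sheaves.) So the inclusion $B^\bullet\subseteq N^\bullet$ is unjustified. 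Neither defect is fatal to your strategy --- the splitting by $e_-$ is real, $N^\bullet$ is at least $S$-stable so $e_-$ restricts to it, and the acyclicity of the non-sign part is precisely the content of \cite[Theorem 6.9 and Corollary 6.17]{FL91} --- but as written the proposal defers the one nontrivial point to a reduction that is not actually available, whereas the paper imports the full bicomplex argument wholesale into $\Shv(B)$.
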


\begin{proof}[Proof of Lemma \ref{lemma:isomorphismDeltaS}]
In the case when the $\Delta S$-sheaf is in the abelian category of constructible sheaves, this is proved in \cite[Lemma 2.7]{Ban24} by a straightforward and direct adaptation of the proof of \cite[Corollary 6.17]{FL91}. 

The proof of \cite[Corollary 6.17]{FL91} readily translates to an $\infty$-categorical analogue in $\Shv(B)$. Indeed, like \cite[\S 6]{FL91}, we consider the co-bisimplicial sheaf \begin{gather}\label{def:cobisimplicial}
\mathcal{G}_{p,q}:=\Q[(S_p)^q]\otimes \F_q
\end{gather} which converges to $\pi_n(\mathrm{Hom}^{\Delta S}_{\Shv(B)}(\underline{\Q}_B, \mathcal{F}_{\bullet}))$ (the latter is equivalent to what Fiederowicz-Loday call `symmetric (co)homology', see \cite[\S 6.6]{FL91}). 

The sheaf $\mathcal{G}_{p,q}$ is a filtered object in $\Shv(B)$ in the sense of \cite[Lemma 1.2.2.4]{Lurie17}, which allows us to adapt the proof of \cite[Theorem 6.9]{FL91}: taking its horizontal filtration, and noting that $\Q$ is a field of characteristic $0$, we obtain---  \begin{equation}
E_1^{p,q}=\begin{cases}
\big(\F_p\otimes \sgn\big)^{S_{p+1}}& q=0\\0 & \text{otherwise.}
\end{cases}
\end{equation} with differentials, naturally, given by the alternating sum of face maps. 

On the other hand, $\F_{\bullet}$, now being considered a $\Delta$-sheaf, via the $\infty$-categorical Dold-Kan (\cite[Theorem 1.2.4.1]{Lurie17})
results in the cochain complex $C^*\Big(\big(\F_p, d\big)\Big)$ which converges to $\pi_n(\mathrm{Hom}^{\Delta}_{\Shv(B)}(\underline{\Q}_B, \mathcal{F}_{\bullet}))$.

By the cohomological analogue of \cite[Theorem 6.16]{FL91}, the two cohomologies are isomorphic, i.e.  $$\pi_n(\mathrm{Hom}^{\Delta}_{\Shv(B)}(\underline{\Q}_B, \mathcal{F}_{\bullet}))\cong  \pi_n(\mathrm{Hom}^{\Delta S}_{\Shv(B)}(\underline{\Q}_B, \mathcal{F}_{\bullet}))$$ for all $n$, which, in turn, proves our lemma.

\end{proof}

\section{Proof of Theorem \ref{theorem}}\label{sec:proof}
Observe that statement \ref{statement3} of Theorem \ref{theorem} immediately follows from Lemma \ref{lemma:r-separating} and Angehrn-Siu's estimate \eqref{eq:rdestimate}. 

We now prove the part of the theorem pertaining an arbitrary polarized smooth projective $Y$, i.e. statement \ref{statement1}.  And then we prove the special natural of the spectral sequence \ref{isom:result} for $Y=\P^N$.

\subsection{Proof of Theorem \ref{theorem}, \ref{statement1}}
 \begin{proof}[Proof of Statement \ref{statement1}]
By Proposition \ref{lemma:hypercoverY} (or, in the case of $Y=\P^N$, Lemma \ref{lemma:hypercover}), \begin{gather}\label{eq:cohomologicaldescent}
\pi_{\bullet}:\X_{\bullet}\to \Z_{\d}(X,Y)
\end{gather} admits cohomological descent. In other words, the unit map \begin{gather}\label{map:unit}
	\mathrm{id}_{\Z_{\d}(X,Y)}\to {\pi_{\bullet}}_*\,\pi_{\bullet}^*
\end{gather} is a natural isomorphism on $\Shv(\Z_{\d}(X,Y))$. In turn, we have:\begin{align}
\Hom(\underline{\Q}_{\Z_{\d}(X,Y)}, \underline{\Q}_{\Z_{\d}(X,Y)} )
&\cong \Hom\big(\underline{\Q}_{\Z_{\d}(X,Y)}, {\pi_{\bullet}}_*\,\pi_{\bullet}^*\underline{\Q}_{\Z_{\d}(X,Y)}\big) \\ 
&\cong C^*\Big(\Hom\big(\underline{\Q}_{\Z_{\d}(X,Y)}, {\pi_{n}}_* \underline{\Q}_{\X_n(Y)}\big)\Big)\\
& \cong C^*\Big(\Hom\big(\underline{\Q}_{\Z_{\d}(X,Y)}, ({\pi_{n}}_* \underline{\Q}_{\X_n(Y)}\otimes\sgn)^{S_{n+1}}\big)\Big),\label{isom:twistedbysgn}
\end{align} where the first isomorphism is a direct consequence of the cohomological descent in \eqref{eq:cohomologicaldescent}, the second isomorphism follows from the fact that if $f$ is a proper map, then $f^*=f^{!}$, and the third isomorphism follows from Lemma \ref{lemma:isomorphismDeltaS}.

Let us recollect the maps in \eqref{map:ifrak} and \eqref{map:jfrak}: \begin{gather}
\mor_{\d}(X,Y)\stackrel{\jfrak}{\hookrightarrow}\X_{-1}(Y)\stackrel{\ifrak}{\hookleftarrow}\Z_{\d}(X,Y).
\end{gather}where $\jfrak$ is an open immersion, and $\ifrak$ closed. So we obtain a cofibre sequence in  $\Shv(\X_{-1}(Y))$:\begin{gather}\label{seq:openclosed}
\jfrak_!\jfrak^!\underline{\Q}_{\X_{-1}(Y)} \to {\underline{\Q}}_{\X_{-1}(Y)} \to \ifrak_*\ifrak^*{\underline{\Q}}_{\X_{-1}(Y)}
\end{gather} (which, observe, is equivalent to the localization distinguished triangle in the derived category of constructible sheaves).
Now, noting that $$\ifrak^*{\underline{\Q}}_{\X_{-1}(Y)}\cong \underline{\Q}_{\Z_{\d}(X,Y)},$$ take $\Hom(\underline{\Q}_{\X_{-1}(Y)}, \text{---})$ of \eqref{seq:openclosed}, plug in \eqref{isom:twistedbysgn} and take global sections of the resulting complex to obtain the following spectral sequence: 
\begin{gather}\label{spectralsequence}
	E_1^{p,q}= \Big(H_c^q(\X_{p-1}(Y);\Q)\otimes \sgn\Big)^{S_{p}}\implies H_c^{p+q}(\mor_{\d}(X,Y);\Q)
\end{gather}where, for $p=0$, we simply let $S_0$ denote the trivial group. 
Since all our constructions are algebraic, this is a spectral sequence of Galois representations/mixed Hodge structures, as the case might be. 

Recalling Lemma \ref{lemma:Xr} on the geometry of $\X_r$, and the assumption in paragraph \ref{para:keyassumption} for the geometry of $\X_r(Y)$, we see that for all $p\leq r(\d)-1$ the spectral sequence \ref{spectralsequence} reads as: \begin{flalign}\label{isom:result}
	\Big(H_c^*(\X_{p-1}(Y);\Q)\otimes \sgn\Big)^{S_{p}} \nonumber\\ 
	\cong (H^*(\mathring{X}^{p};\Q)\otimes \sgn)^{S_p}\otimes H^*(\pic_{\d}(X);\Q)\otimes H_c^*(Y(D_{p-1});\Q)
\end{flalign}
where recall that $\mathring{X}^{p}$ denotes the space $X^p-\text{diagonals}$.

\end{proof}

\begin{remark}The reader is encouraged to compare \eqref{isom:result} with \cite[Theorem 7.1]{BG91}.\end{remark}

\subsection{Proof of Theorem \ref{theorem}, \ref{statement2}}

For $Y=\P^N$, by Lemma \ref{lemma:Xr} the terms of the spectral sequence \eqref{spectralsequence}, following \eqref{isom:result}, reads as:\begin{gather}\label{ss:P^N}
	(H^*(\mathring{X}^{p};\Q)\otimes \sgn)^{S_p}\otimes H^*(\pic_{\d}(X);\Q)\otimes H^*(\P^{(N_d-p)(N+1)-1};\Q)\nonumber\\ \cong
	(H^*(X;\Q)^{\otimes p}\otimes \sgn)^{S_p}\otimes H^*(\pic_{\d}(X);\Q)\otimes H^*(\P^{(N_d-p)(N+1)-1};\Q)
\end{gather} for all $p\leq r(\d)$. Indeed, this isomorphism follows from the simple observation that for any scheme $$(H^*(\mathring{X}^{p};\Q)\otimes \sgn)^{S_p}\cong 	(H^*(X;\Q)^{\otimes p}\otimes \sgn)^{S_p}.$$ Thus, the spectral sequence \eqref{spectralsequence} boils down, in case of $Y=\P^N$, to $E_1^{p,*} =$
\begin{align}\label{spectralsequence:P^N}
\begin{cases}
(H^*(X;\Q)^{\otimes p}\otimes \sgn)^{S_p}\otimes H^*(\pic_{\d}(X);\Q)\otimes H^*(\P^{(N_d-p)(N+1)-1};\Q) &p\leq r(\d)+1\\
\Big(H_c^q(\X_{p-1};\Q)\otimes \sgn\Big)^{S_{p}} & p>r(\d)+1
\end{cases}  
\end{align}
For a smooth projective variety $V$, the cohomology $H^i(V;\Q)$ is pure of weight $i$, which implies that the differentials between the terms of the spectral sequence \eqref{spectralsequence:P^N} vanish on the $E_2$-page and above for $p\leq r(\d)+1$. 

The result in statement \ref{statement2} follows immediately by a simple application of the Lefschetz hyperplane theorem. Indeed, 
the codimension of $\Z_{\d}(X,\P^N)$ in $\X_{-1}$ is $nN$, and likewise, a simple computation shows that the codimension of the image of $\X_r$ in $\X_{-1}$ is $\geq (r+1)nN$ for all $r\geq 0$; in turn the terms of the spectral sequence \ref{spectralsequence:P^N} for $p>r(\d)+1$ have no contribution to $H^i(\mor_{\d}(X,\P^N);\Q)$ for $i\leq r(\d)+1$, which completes the proof of Theorem \ref{theorem}, statement \ref{statement2}.

		\bibliographystyle{alpha}
	\bibliography{ConfModel}
\end{document}